\newcommand\blfootnote[1]{%
	\begingroup
	\renewcommand\thefootnote{}\footnote{#1}%
	\addtocounter{footnote}{-1}%
	\endgroup
}
\renewcommand\theequation%
\def\bb{{\mathcal B}}
\def\dd{{\mathcal D}}
\def\ff{{\mathcal F}}
\def\qq{{\mathcal Q}}
\def\xx{{\mathcal X}}
\def\E{{\mathbb{E}}}
\def\K{{\mathbb{K}}}
\def\N{{\mathbb{N}}}
\def\P{{\mathbb{P}}}
\def\R{{\mathbb{R}}}
\newcommand{\wJ}{\widetilde P}
\newcommand{\norm}[1]{{\left\|{#1}\right\|}}
\newcommand{\wA}{\widehat{A}_N^{(M)}}
\newcommand{\PP}{\mathbb{P}}
\newtheorem{lemma}{Lemma}[section]
\newtheorem{proposition}[lemma]{Proposition}
\newtheorem{theorem}[lemma]{Theorem}
\newtheorem{definition}[lemma]{Definition}
\newtheorem{remark}[lemma]{Remark}
	\renewcommand{\theequation}{\thesection.\arabic{equation}}
	\title{Eigenvalue distribution of some random matrices}
	\date{}
		\author[1,2]{Mohamed Jebalia}
	\affil[1]{University of  Carthage,
	 Department of Mathematics, Research Laboratory LR21ES10, Faculty of Sciences of Bizerte,		 Bizerte, Tunisia.
}
	\author[,1]{Ahmed Souabni\thanks{Corresponding author: \texttt{souabniahmed@bizerte.r-iset.tn}}}
	\affil[2]{University of  Carthage,
		National School of Engineering of Bizerte, Bizerte, Tunisia.
	}
\begin{document}
		\maketitle
		\blfootnote{This work was supported in part by the  
			DGRST  research grant  LR21ES10.}

		\begin{abstract}
			In this paper, we investigate the eigenvalue distribution of a class of kernel random matrices whose $(i,j)$-th entry is $f(X_i,X_j)$ where $f$ is a symmetric function belonging to the Paley-Wiener space $\mathcal{B}_c$ and $(X_i)_{1\leq i \leq N}$ are i.i.d. random variables. 
			We rigorously prove that, with high probability, the eigenvalues of these random matrices are well approximated by those of an underlying estimator.   
			A particularly notable case is when $f=sinc$ , which has been widely studied due to its relevance in various scientific fields, including machine learning and telecommunications. 
			In this case, we push forward the general approach by computing the eigenvalues of the estimator. More precisely, we have proved that the eigenvalues are concentrated around zero and one. In particular, we address the case of large values of $c$ with respect to the matrix size $N$, which, to the best of our knowledge, has not been studied in the literature. Furthermore, we establish that the frequency of eigenvalues close to one is proportional to $c$. Numerical results are provided in order to illustrate the theoretical findings.
		\end{abstract}
		\maketitle
		\noindent {\bf MSC} : Primary: 62H10, 42A38. Secondary: 60F99.
		\\ 
		{\bf  Keywords} : Kernel random matrices, Finite Fourier transform, Eigenvalues distribution.\\
		
		\section{Introduction}
		In this work, we are interested in the study of the distribution as well as the computation of the eigenvalues of random  matrices  	$A_N \in \R^{N \times N}$ that is written as 
		$$A_N(i,j)=f(X_i,X_j)$$
		where $f$ is a real valued  function and  $X_1,\ldots,X_N$ are i.i.d random variables. The function $f$ is assumed to be symmetric implying that $A_N$
		is also symmetric. This model covers various types of random matrices, for example Euclidean matrices
		$ A_N(i,j)=g(\left\|X_i-X_j\right\|) $ and $ A_N(i,j)=g(<X_i,X_j>)$ for a given function $g$. 	The study of these matrices is motivated by the fact that they appear in many scientific areas such as big data, statistics, machine learning, geophysics, telecommunication, optics, see for example \cite{BBZ2007,BK2,DLP2013,JK2024,Skipetrov,Skipetrov2,Parisi2006,RW2005,Shawe_et_al2005}.\\
		In statistics for example, a common problem is the non-parametric regression problem having the form 
		$$Y_i=f(X_i) + \epsilon_i, \ i=1,\ldots,N$$
		where the $X_i$ are i.i.d random entries and $(Y_i)$ are the response variables.
		Among resolution methods, Kernel Ridge Regression  (KRR) computes the  estimator of the regression function $f$  as a linear combination using a positive definite kernel function $\K(\cdot,\cdot)$, that is
		$$\widehat{f}_N(.)=\sum_{i=1}^{N} \K(X_i,.) \widehat{c}_i $$
		where  the expansion coefficients vector $\widehat{c}$ is the solution of the equation 
			\begin{equation}
			\label{KRR}
			G \widehat{c}= Y, \quad  \quad Y=(Y_1,\ldots, Y_N)^T. 
		\end{equation}
	Here, the kernel (or Gram) matrix  $G$ is a random matrix positive-definite, given by
	$G(i,j)=(\K(X_i,X_j))_{1 \le i,j \le N}$.  It has been shown that the performance of KRR methods is closely related to the behaviour of the eigenvalues of the matrix $G$, see for example \cite{Amini2021,BsK2024,JKh2024}. ~\\
	Random matrices appear also in Physics. For example, wave propagation in random media in a three dimensional space  is modeled by the Helmholtz equation
	$$(\nabla^2+ c^2+i \eta)A_N(X_i,X_j)=-\frac{4\pi}{c} \delta(X_i-X_j). $$
	where  the solution is the Green function given by the random  matrix 
	$$ A_N(i,j)=\frac{\exp(i c ||X_i - X_j||)}{c||X_i-X_j||},$$
	where the $X_i$ are three dimensional random vectors and $||\cdot||$ is the Euclidean norm in $\R^3$.	The study of this wave equation is achieved through the study of the eigenvalue distribution of the real and imaginary parts of $A_N$	(see \cite{Skipetrov}).\\
	Another application of random matrices in telecommunications is given in \cite{BK2}, where the authors studied the distribution of the spectrum of a random matrix in order to estimate  the capacity of a Multi Input Multi Output (MIMO) wireless communication model.\\
	~\\
	In many applications, the order $N$ of the random matrix has to be large for improving the efficiency of the underlying methods. For example, regression problems need a large data size $N$.
	 For this reason, many studies have investigated the eigenvalues distribution of large random matrices. Mainly, three approaches have been developed to achieve this goal. \\~\\
	The first approach is to prove that the spectrum of the kernel random matrix $A_N$ remains close to the one of the integral operator associated with the same kernel \cite{AB2022,BK2,KG2000,Shawe_et_al2005}. Therefore, we rely on the extensive literature on the eigenvalues of kernel integral operators. In \cite{Shawe_et_al2005}, the authors have estimated the eigenvalues of the Gram matrix of the kernel-PCA method through the eigenspectrum of the corresponding continuous operator. 
	The studies \cite{AB2022,BK2} have investigated kernel random matrices. They have shown that their eigenvalues are close to those of  the associated integral operator, both in terms of quadratic mean and with high probability. Then, they computed the so-called degree of freedom of the associated integral operator.  At a fixed level $\epsilon>0$, the degree of freedom  of a semi-definite positive and self-adjoint Hilbert-Schmidt operator $T$ is the number of eigenvalues of $T$ that  exceed $\epsilon$.  In particular,  \cite{BK2} focused on  the sinc kernel
	$$A_N(i,j)=\frac{\sin (c(X_i-X_j)) }{\pi(X_i-X_j)} $$
	where the $X_i$ are $N$ i.i.d. random variables. In the specific case where $c \rightarrow +\infty$  and $\frac{N}{c} \rightarrow +\infty$  (implying that $c <<N$), it has been shown that the sinc kernel matrix has approximately $c$ significant eigenvalues.

~ \\
The second approach to investigate the eigenvalues distribution of the random matrix $A_N$ is to study its spectral distribution for large values of $N$. The spectral measure associated with a random Hermitian matrix $A_N \in \R^{N \times N}$ is given by
$$ \mu_{A_N}=\frac{1}{N}  \sum_{i=1}^{N} \delta_{\lambda_i(A_N)}$$
where $\lambda_{i}(A_N), i=1,\ldots,N$ are the eigenvalues of $A_N$. 
The idea is to prove that the distribution of $\mu_{A_N}$ converges to a deterministic one denoted by $\mu$ (see for example \cite{NKaroui2010}). This can be done by computing the limit of the Stieljies (or Cauchy) transform of the measure $\mu_{A_N}$. The Stieljes transform of a measure $\lambda$ is given by
$$ z \mapsto G_{\lambda}(z)= \int \frac{d \lambda(t)}{t-z}.$$
It is well known that
$$G_{\mu_{A_N}}(z)=\frac{1}{N} \mbox{Trace}\left((A_N-z I_N)^{-1}\right). $$
If the sequence of functions $(G_{\mu_{A_N}})_{N \geq 1}$ converges pointwise then $\mu_{A_N}$ converges weakly to some measure $\mu$. In the framework of kernel random matrices, N. El Karoui \cite{NKaroui2010}  has investigated this problem where the $(i,j)^{th}$ entry has the form $f\left(X_i^TX_j/p\right)$ or $f\left(\frac{||X_i-X_j ||_2^2}{p}\right)$ where $X_i \in \R^p$ are independent data vectors. The function $f$  has to be locally smooth.\\

A third way  to  study the eigenvalue distribution of a random matrix has been proposed in \cite{Skipetrov}. It has been applied in the specific case of the three dimensional sinc-kernel matrix. This method consists on the approximation of $A_N$ by the product $  H T H^T$ where $H$ is a random matrix and $T$ is a deterministic one. Then we conclude by Free probability arguments.

In this work, we investigate the eigenvalue distribution of the random matrix $A_N$using the approach introduced in \cite{Skipetrov}. A key advantage of this method lies in its generality, as it does not rely on the theory of integral operators and can be applied to any kernel $K$ within the Paley–Wiener space. In the case of the sinc kernel, we show that our findings are consistent with those previously reported in \cite{AB2022,BK2}. It is important to note that the method given in \cite{AB2022,BK2} is restricted for small values of $c$ with respect to the matrix size $N$, whereas our approach remains valid for arbitrary values of $c$.\\
More precisely, we rigorously derive and generalize the heuristic result proposed in \cite{Skipetrov} for a one-dimensional search space. Our analysis is based on approximating the matrix $A_N$by the product $\wA=HT_M H^T$ where $H \in \R^{N\times M} $ is a random matrix and $T_M \in \R^{M\times M}$ is a deterministic one. We rigorously prove the result of \cite{Skipetrov}, that is $\displaystyle A_N=\lim_{M \rightarrow +\infty} \wA$for a broad class of random matrices (see Proposition~\ref{prop:A_Decomposition}). Furthermore, using McDiarmid’s inequality and Weyl’s perturbation theorem, we establish that the eigenvalues of $A_N$
are closely approximated by those of $\wA$ with high probability (see Theorem~\ref{approx_vp}).
In the particular case of the sinc kernel, we also present several structural and approximation results concerning the matrix $T_M$
(Section~\ref{sec:sinc}). Moreover, we prove that the eigenvalues are concentrated around zero and one depending on the ratio $c/N$ (Theorem \ref{THN>M} and Theorem \ref{th:chernoff}). The frequency of eigenvalues close to one is proportional to $c$. Numerical experiments are provided following each key theoretical result in order to illustrate and validate the analysis, particularly in the sinc kernel setting.

		\section{General Approach}
		A useful approach to deal with the statistical properties of $A_N$ was developed in \cite{Skipetrov} for some kernel random matrices.  It consists on writing the matrix $A_N$ as the limit of an  estimator. We generalize  the result in \cite{Skipetrov} for any random matrix $A_N$ with entry elements $A_N(i,j)=f(X_i,X_j)$ where $f$ is a symmetric (i.e., $f(x,y)=f(y,x)$) real valued function {of $L^2([-1,1]\times[-1,1])$}. 
		\subsection{Approximation model of random matrices and convergence in average}
		To determine the eigenvalues distribution of the matrix $A_N$, we will prove in the following paragraph that it can be approximated by a simpler one. Firstly, we will define matrices that will be used throughout this paper.
		\begin{definition}
			\label{def:H_T}
		Let  $ (\phi_n)_{n \geq 0}$ be an orthonormal basis of $L^2([-1,1])$ and let
		$f \in  L^2([-1,1] \times [-1,1])$ be a symmetric real valued function and let $(X_i)_{1 \leq i \leq N}$ be $N$ i.i.d. random variables with values on  $[-1,1]$. We define the following matrices.
		\begin{itemize}
			\item 	$H$ is an $N\times M$ random matrix with elements 
			$$   H(i,m)= \phi_{m-1}(X_i), \quad \forall  i \in \{1,\ldots,N\}, \forall m \in \{1,\ldots,M\}.$$ 
			\item $T_M$ is a symmetric deterministic $M \times M$ matrix with elements 
			$$\displaystyle T_{M}(m,n) =  \int_{-1}^{1} \int_{-1}^1  
			f(x,y)
			\phi_{m-1}(x) \phi_{n-1}(y) dx dy.$$
			\item $\wA=H.T_M.H^T \in \R^{N \times N}$ is a random matrix.
		\end{itemize}
		\end{definition}
	    The approximation, in an $L^2$ sense, is given by the following proposition.
		\begin{proposition}
			\label{prop:A_Decomposition}
			Under the notations of Definition \ref{def:H_T}, we have
			\begin{equation}\label{decomposition0}
				A_N =  \wA + R_N^{(M)},
			\end{equation} 
			where 
			 {$R_N^{(M)}$ is a random matrix defined as
					$$R_N^{(M)}(i,j)= \sum_{k=0}^{M-1}< r_M(.,X_j) ;\phi_{k}(.) >_{L^2([-1,1])}  \phi_{k}(X_i) + r_M(X_i,X_j) $$	
					with 
					$r_M(x,y)=\displaystyle\sum_{k=M}^{+\infty} < f(.,y), \phi_k(.) >_{L^2([-1,1])} \phi_k(x) .$}
			Therefore
			\begin{equation}\label{decomposition}
				A = \lim_{M \to \infty} \wA.
			\end{equation} 
		\end{proposition}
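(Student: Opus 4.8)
The plan is to expand $f(x,y)$ in the orthonormal basis $(\phi_n)_{n\geq 0}$ in one variable, truncate after $M$ terms, and track the remainder all the way through the quadratic form $H T_M H^T$. Concretely, fix $j$ and write $f(\cdot,X_j)\in L^2([-1,1])$ as $f(x,X_j)=\sum_{k=0}^{\infty}\langle f(\cdot,X_j),\phi_k\rangle\,\phi_k(x)$, and set $a_k(X_j):=\langle f(\cdot,X_j),\phi_k\rangle$. Splitting the sum at $k=M$ gives $f(x,X_j)=\sum_{k=0}^{M-1}a_k(X_j)\phi_k(x)+r_M(x,X_j)$, which is exactly the definition of $r_M$ in the statement. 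Evaluating at $x=X_i$ yields
\begin{equation}\label{eq:step1}
A_N(i,j)=f(X_i,X_j)=\sum_{k=0}^{M-1}a_k(X_j)\,\phi_k(X_i)+r_M(X_i,X_j).
\end{equation}

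Next I would identify the first sum with $\wA(i,j)=(H T_M H^T)(i,j)$ up to a correction. By definition $H(i,k+1)=\phi_k(X_i)$ and $T_M(m,n)=\int\!\!\int f(x,y)\phi_{m-1}(x)\phi_{n-1}(y)\,dx\,dy$, so
\[
(H T_M H^T)(i,j)=\sum_{k,\ell=0}^{M-1}\phi_k(X_i)\,T_M(k+1,\ell+1)\,\phi_\ell(X_j)
=\sum_{k=0}^{M-1}\phi_k(X_i)\Big(\sum_{\ell=0}^{M-1}T_M(k+1,\ell+1)\phi_\ell(X_j)\Big).
\]
Now $\sum_{\ell=0}^{M-1}T_M(k+1,\ell+1)\phi_\ell(X_j)$ is the partial-sum-projection of the function $y\mapsto \int f(x,y)\phi_k(x)\,dx$, whereas the coefficient appearing in \eqref{eq:step1} is $a_k(X_j)=\int f(x,X_j)\phi_k(x)\,dx$, i.e.\ the \emph{full} value of that function at $y=X_j$. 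The difference between the full value and its truncated $\phi$-expansion is precisely $\langle r_M(\cdot,X_j),\phi_k\rangle$ — one checks $\langle r_M(\cdot,X_j),\phi_k\rangle=a_k(X_j)-\sum_{\ell=0}^{M-1}T_M(k+1,\ell+1)\phi_\ell(X_j)$ using $T_M(k+1,\ell+1)=\langle a_k(\cdot),\phi_\ell\rangle$ and symmetry of $f$. Substituting this back into \eqref{eq:step1} gives $A_N(i,j)=\wA(i,j)+\sum_{k=0}^{M-1}\langle r_M(\cdot,X_j),\phi_k\rangle\phi_k(X_i)+r_M(X_i,X_j)$, which is exactly the asserted identity \eqref{decomposition0} with the stated $R_N^{(M)}$.

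For \eqref{decomposition} it remains to show $R_N^{(M)}\to 0$ as $M\to\infty$ in the relevant (almost sure / $L^2$) sense, entrywise. Since $(\phi_k)$ is an orthonormal basis, Parseval gives $\|r_M(\cdot,X_j)\|_{L^2}^2=\sum_{k\geq M}a_k(X_j)^2\to 0$, hence $\langle r_M(\cdot,X_j),\phi_k\rangle\to 0$ for each $k$ and the first sum (a finite sum of terms each tending to zero, once we also note boundedness of $\phi_k(X_i)$ — or argue in $L^2$ using $\mathbb E|\phi_k(X_i)|^2<\infty$ for the uniform or absolutely-continuous law on $[-1,1]$) vanishes; and $r_M(X_i,X_j)\to 0$ since $r_M\to 0$ in $L^2([-1,1]^2)$ by Parseval applied to the two-variable expansion. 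The one genuinely delicate point is the mode of convergence: pointwise convergence of $r_M(X_i,X_j)$ does not follow from $L^2$ convergence alone, so I would state \eqref{decomposition} as convergence in $L^2(\Omega)$ (equivalently, $\mathbb E\,\|A_N-\wA\|_F^2\to 0$), which is what \cite{Skipetrov} needs and what the surrounding text ("in an $L^2$ sense") indicates. That is the main obstacle — making precise, and honest about, the sense in which the limit holds — while the algebraic identity \eqref{decomposition0} itself is a routine bookkeeping of Fourier coefficients.
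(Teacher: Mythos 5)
Your proof is correct and follows essentially the same route as the paper: a double expansion of $f$ in the basis $(\phi_n)$, truncation at $M$, identification of the truncated double sum with $(HT_MH^T)(i,j)$, and bookkeeping of the two remainder terms (your observation that the cross term $a_k(X_j)-\sum_{\ell<M}T_M(k{+}1,\ell{+}1)\phi_\ell(X_j)$ equals the stated inner product requires the symmetry of $f$ is exactly the point the paper uses implicitly). Your closing remark on the mode of convergence of $R_N^{(M)}$ is in fact more careful than the paper's one-line justification and is consistent with its Lemma on $\E\bigl[\|R_N^{(M)}\|_{HS}\bigr]$.
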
	
		\begin{remark}~\\
			The previous result can be easily generalized to any range $[a,b]$ instead of $[-1,1]$. Moreover, 
			 it can be generalized to the case where the $X_i$ are random vectors. This recovers  Euclidean random matrices which can be written as   $$A_N(i,j)=h(||X_i-X_j||) \quad  \mbox{or}  \quad A_N(i,j)=h(<X_i,X_j>)$$
			 for any real valued function $h$. 
		\end{remark}
	
		\begin{proof}~\\
			By applying two successive decomposition of the function $(x,y) \mapsto f(x,y)$ on the basis $(\phi_{n})$ first according to the variable $x$ then according to the variable $y$, we get:
			\begin{eqnarray}
				f(x,y) &=& \sum_{k=1}^{M} \sum_{l=1}^{M} \left[  \int_{-1}^1
				\int_{-1}^1 f(t,u) \phi_{l-1}(u)   
				\phi_{k-1}(t) du \ dt \right]  \phi_{k-1}(x) \phi_{l-1}(y) \nonumber \\ &+&  \int_{-1}^1 \sum_{k=1}^{M} \phi_{k-1}(x) \phi_{k-1}(t) r_M(t,y) dt + r_M(x,y)
			\end{eqnarray}
			with $ \displaystyle r_M(x,y)=\sum_{k=M}^{+\infty} \int_{-1}^1 f(t,y) \phi_k(t)   dt \phi_k(x) $.  
			The quantity $R_N^{(M)}$ converges to 0 since $(\phi_{n})_{n \ge 0}$ is a basis. 
		\end{proof}		
			
			From the last Proposition, we see that the approximation
	
			$$ 	A \approx \wA=H.T_M.H^T$$
			holds in an $L^2$ sense. 
		In order to validate this result, we have conducted numerical simulations in the case where $f(x,y)=\frac{\sin \left(c(x-y)\right)}{c(x-y)}$.
			Using Legendre polynomials as a basis, $c=1$ and a uniform sampling of the random variables $X_1,\ldots,X_N$ with $N=5$ on $[-1,1]$, we have computed the average value of $||A-\wA||_{HS}$ over 21 samplings of the entries $X_i$. The results are reported in Table \ref{table1:unif} and show that $\lim_{M \to \infty} ||A-\wA||_{HS}=0$.
			\begin{table}[h!]
				\centering
				\begin{tabular}{ |c|c|c|c|c|c| } 
					\hline
					M & 2 & 4 & 6 & 8 & 10\\ 
					\hline
					$||A-\wA||_{HS}$ & 0.3379 & 0.00397 & 2.35e-05 & 8.59e-08 &1.87e-10 \\ 
					\hline \hline
					M & 12 & 14 & 16 & 18 & 20 \\ 
					\hline
					$||A-\wA||_{HS}$ & 3.27e-13 & 4.70e-15 & 4.26e-15 & 5.51e-15 & 6.60e-15 \\ 
					\hline
				\end{tabular}
				\caption{$||A-\wA||_{HS}$ as a function of $M$ using the uniform law.}
				\label{table1:unif}
			\end{table}
			~\\

			The  expected error $\E[||R_N^{(M)}||]$ resulting from the approximation of $A$ by $\wA$ is estimated in the following statement.

			\begin{lemma}~\\
				\label{prop:R_M_c_const}
				Let $f \in L^2([-1,1]^2)$ and let $||.||_{HS}$ be the Hilbert-Schmidt matrix norm.Then
				$$\E \left[||R_N^{(M)}||_{HS}\right] \leq N(M+1) ||r_M||_{L^2([-1,1]^2)}.$$
			\end{lemma}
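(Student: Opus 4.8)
I would reduce the first moment to a second-moment computation via Jensen's inequality, and then estimate each entry of $R_N^{(M)}$ separately using Cauchy--Schwarz, Bessel's inequality and the independence of the $X_i$.

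\emph{Reduction to second moments.} By concavity of $t\mapsto\sqrt t$ (Jensen),
$\E\big[||R_N^{(M)}||_{HS}\big]\le\big(\E\big[||R_N^{(M)}||_{HS}^2\big]\big)^{1/2}=\Big(\sum_{i,j=1}^{N}\E\big[R_N^{(M)}(i,j)^2\big]\Big)^{1/2}$. It therefore suffices to prove the entrywise bound $\E\big[R_N^{(M)}(i,j)^2\big]\le(M+1)^2\,||r_M||_{L^2([-1,1]^2)}^2$ for every pair $(i,j)$: summing over the $N^2$ pairs and taking the square root then yields the claim.

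\emph{Entrywise estimate.} Write $a_k(y):=\langle r_M(\cdot,y),\phi_k\rangle_{L^2([-1,1])}$, so that $R_N^{(M)}(i,j)=\sum_{k=0}^{M-1}a_k(X_j)\phi_k(X_i)+r_M(X_i,X_j)$ is a sum of $M+1$ terms; Cauchy--Schwarz in $\R^{M+1}$ gives $R_N^{(M)}(i,j)^2\le(M+1)\Big(\sum_{k=0}^{M-1}a_k(X_j)^2\phi_k(X_i)^2+r_M(X_i,X_j)^2\Big)$. Taking expectations, I use that the $X_i$ are (as throughout) i.i.d.\ uniform on $[-1,1]$, so $\E[\phi_k(X)^2]=\tfrac12\int_{-1}^1\phi_k^2\le1$, and that for $i\ne j$ the independence of $X_i,X_j$ factorizes $\E\big[a_k(X_j)^2\phi_k(X_i)^2\big]=\E\big[a_k(X_j)^2\big]\,\E\big[\phi_k(X_i)^2\big]\le\E\big[a_k(X_j)^2\big]$. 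By Bessel's inequality for the expansion of $r_M(\cdot,y)$ in $(\phi_k)$, $\sum_{k=0}^{M-1}a_k(y)^2\le||r_M(\cdot,y)||_{L^2([-1,1])}^2$; integrating in $y$ and using Fubini then gives $\sum_{k=0}^{M-1}\E\big[a_k(X_j)^2\big]\le\tfrac12||r_M||_{L^2([-1,1]^2)}^2$, and likewise $\E\big[r_M(X_i,X_j)^2\big]\le||r_M||_{L^2([-1,1]^2)}^2$ for $i\ne j$. Combining, $\E\big[R_N^{(M)}(i,j)^2\big]\le\tfrac32(M+1)\,||r_M||_{L^2([-1,1]^2)}^2\le(M+1)^2\,||r_M||_{L^2([-1,1]^2)}^2$ for every off-diagonal pair, and the $N$ diagonal entries (involving $r_M(X_i,X_i)$) are bounded by the same tools.

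\emph{Main obstacle.} The only genuinely delicate point is the expectation step: the term-by-term Cauchy--Schwarz, Bessel's inequality and Fubini must be organized so that, after integrating out the spectator variable, every contribution collapses onto the single quantity $||r_M||_{L^2([-1,1]^2)}^2$ (this is also where one uses that the $\phi_k$ are orthonormal and, if one prefers to carry out the bookkeeping through the full double expansion of $f$, where the symmetry of $f$ enters). The factor $M+1$ is the unavoidable cost of that crude term-by-term Cauchy--Schwarz. A minor nuisance — harmless in all the applications treated here, where $f$ is continuous — is that the diagonal entries $r_M(X_i,X_i)$ sit on a Lebesgue-null subset of $[-1,1]^2$, so that estimating them rigorously asks for slightly more than mere membership of $f$ in $L^2$.
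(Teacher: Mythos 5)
Your proof is correct and lands on the same bound, but it organizes the estimates differently from the paper. The paper first applies the Minkowski (triangle) inequality to the Hilbert--Schmidt norm, viewing $R_N^{(M)}$ as a sum of $M+1$ matrices and bounding the expectation of each summand's norm separately via $\E[\sqrt{t}]\le\sqrt{\E[t]}$; you instead apply Jensen once globally and then Cauchy--Schwarz in $\R^{M+1}$ on each entry. The two routes are close cousins --- both pay the factor $M+1$ for the same crude term-by-term splitting --- but yours has a small bonus: your entrywise bound $\E[R_N^{(M)}(i,j)^2]\le \tfrac32 (M+1)\,\|r_M\|_{L^2([-1,1]^2)}^2$ would, after summing over the $N^2$ pairs and taking the square root, give $\E[\|R_N^{(M)}\|_{HS}]\lesssim N\sqrt{M+1}\,\|r_M\|_{L^2([-1,1]^2)}$, sharper by a factor $\sqrt{M+1}$ than the stated lemma, before you deliberately weaken it to $(M+1)^2$ per entry to match the claim. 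Two caveats on the diagonal, which you partly flag yourself. The factorization $\E[a_k(X_j)^2\phi_k(X_i)^2]=\E[a_k(X_j)^2]\,\E[\phi_k(X_i)^2]$ holds only for $i\ne j$; for $i=j$ the quantity $\E[a_k(X_i)^2\phi_k(X_i)^2]$ is not controlled by $\|r_M\|_{L^2}^2$ alone (the Legendre basis has $\|\widetilde P_k\|_\infty=\sqrt{k+1/2}$), and $r_M(X_i,X_i)$ lives on a Lebesgue-null set, so ``bounded by the same tools'' is not literally true for $f$ merely in $L^2$: some pointwise control on $r_M$ (available in all of the paper's applications, where $f\in B_c$) is genuinely needed there. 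In fairness, the paper's own proof has exactly the same silent gap --- its identities $\E_{X_i,X_j}[r_M^2(X_i,X_j)]=\|r_M\|^2_{L^2([-1,1]^2)}$ and $\E_{X_i}[\phi_k^2(X_i)]=1$ are likewise only valid off the diagonal (and the value $1$ should be $1/2$ for the uniform law, which only helps) --- so your argument is at least as rigorous as the one in the paper.
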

			
			\begin{proof}
				{
					From the decomposition established in Proposition~\ref{prop:A_Decomposition}, we have
					$$||R_N^{(M)}||_{HS}=\sqrt{\sum_{1 \leq i,j \leq N} (R_N^{(M)})^2(i,j)} 
					=\sqrt{\sum_{i,j=1}^{N}   
						\left[ \sum_{k=0}^{M-1} <r_M(\cdot,X_j);\phi_{k}(\cdot)>\phi_{k}(X_i)
						+r_M(X_i,X_j)   \right]^2 }.
					$$
					Using Minkowski inequality, one gets
					$$||R_N^{(M)}||_{HS} \leq 
					\sum_{k=0}^{M-1} \sqrt{\sum_{i,j=1}^{N}  <r_M(\cdot,X_j);\phi_{k}(\cdot)>^2\phi_{k}^2(X_i)} 
					+ \sqrt{\sum_{i,j=1}^{N} r_M(X_i,X_j)^2}.	 $$
					By taking the expectation of the previous inequality, one obtains
					$$\E\left[||R_N^{(M)}||_{HS}\right] \leq 
					\sum_{k=0}^{M-1} \E \sqrt{\sum_{i,j=1}^{N}  <r_M(\cdot,X_j);\phi_{k}(\cdot)>^2\phi_{k}^2(X_i)} 
					+ \E \sqrt{\sum_{i,j=1}^{N} r_M(X_i,X_j)^2}.	 $$
					The inequality $\E[\sqrt{t}] \leq  \sqrt{\E[t]}$ implies that
					$$\E\left[||R_N^{(M)}||_{HS}\right]_{H-S} \leq 
					\sum_{k=0}^{M-1} \sqrt{\sum_{i,j=1}^{N}  
						\E\left[ <r_M(\cdot,X_j);\phi_{k}(\cdot)>^2\phi_{k}^2(X_i)\right]} 
					+ \sqrt{\sum_{i,j=1}^{N}\E\left[ r_M(X_i,X_j)^2\right]}.	 $$
					We notice now that
					\begin{equation}
						\begin{aligned}
							\E_{X_i,X_j}\left[ <r_M(\cdot,X_j);\phi_{k}(\cdot)>^2\phi_{k}^2(X_i)  \right] & =
							\E_{X_j}\left[ <r_M(\cdot,X_j);\phi_{k}(\cdot)>^2  \right]
							\underbrace{\E_{X_i}\left[ \phi_{k}^2(X_i)  \right]}_{=1}\\
							& \leq \E_{X_j}\left[ ||r_M(\cdot,X_j)||_{L^2([-1,1])}^2 \underbrace{||\phi_k||_{L^2([-1,1])}^2}_{=1}\right]\\
							&=||r_M||_{L^2([-1,1]^2)}^2.
						\end{aligned}
					\end{equation}
					Moreover $\E_{X_i,X_j}\left[ r_M^2(X_i,X_j) \right]=||r_M||_{L^2([-1,1]^2)}^2.$\\
					Consequently $\E\left[||R_N^{(M)}||_{HS}\right] \leq (M+1)N||r_M||_{L^2([-1,1]^2)}$. 
				}
			\end{proof}

			In the following section, we will show that the eigenvalues of $A_N$ are well approximated by those of the  matrix $\wA$.
			\subsection{Case of random matrices related to band-limited kernels and convergence of eigenvalues in high probability}
			In the sequel, let $c$ be a positive real number. Let $\ff(f)$
			be the Fourier transform of $f$  given by
			$$\ff(f)(x)=\frac{1}{\sqrt{2\pi}} \int_{\R} e^{-itx}f(t) dt. $$
			We define the Paley-Wiener space $ \bb_c = \{ f\in L^2(\R) : \ff f \in [-c,c] \} $ and the space $\dd$ of $[-1,1]$-time limited functions. Let $B_c$ and $D$ be the projection from $L^2(\R)$ onto $\bb_c$ and $\dd$ respectively :
			$$ B_c f := \ff^{-1} {\mathds 1}_{[-c,c]} \ff f \qquad D f = f(x) {\mathds 1}_{[-1,1]} (x).$$ 
			
			Throughout this section, we assume that $f \in B_c$.
			
			Next, we recall that  Legendre polynomials can be defined, for example, through the Rodriguez formula, 
			$$ P_n(x)=\frac{1}{2^n n!} \frac{d^n}{dx^n}(1-x^2)^n, \ x \in I=[-1,1]. $$
			The normalized Legendre polynomials is given by
			$$ \displaystyle \widetilde{P}_n (x)= \sqrt{n+\frac{1}{2}} P_n(x).$$
			The family $\{\widetilde{P}_n\}$ forms an orthonormal basis of $L^2(I)$. In the sequel, we expand $f$ in the basis of the normalized Legendre polynomials. We denote  $\pi_n$ its associated projection  kernel given by  $$\pi_n(x,y)=\sum_{k=0}^{n-1}\phi_k(x) \phi_k(y).$$ 
			We also  denote the associated integral operator by  $\Pi_n$ and  $r_M(f)=f-\Pi_M(f)$.

			Lemma~\ref{prop:R_M_c_const} induces that if $\norm{r_M(f)}$ has a sufficient decay rate then $\E \left[\norm{R_N^{(M)}} \right]$ converges to zero. In the following, we study the decay of $\norm{r_M(f)}$. \\

			\begin{proposition}
				\label{prop1}
				Suppose that, for all $y \in [-1,1]$, the function $x \mapsto f(x,y) \in B_c$.
				Under the condition $M > \frac{ec}{2}$  , we have
				\begin{equation}
					\norm{r_M(f)}_{L^2([-1,1]^2)}=\norm{f-{\Pi_M}(f)}_{L^2([-1,1]^2)} \leq 
					\frac{2\sqrt{\pi}e^{-3/2}}{\sqrt{3}c} \left(\frac{ec}{2M}\right)^{M+\frac{3}{2}}
					\frac{1}{\sqrt{1-\left(\frac{ec}{2M}\right)^2}}
					||f||_{L^2([-1,1]^2)}.
				\end{equation}
		
			\end{proposition}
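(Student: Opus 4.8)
The plan is to peel off the second variable and reduce to a one-dimensional estimate for the Legendre coefficients of a band-limited function, and then to sum a super-geometric series. Since the projection $\Pi_M$ acts only in the first variable, Fubini's theorem gives
$$\norm{r_M(f)}_{L^2([-1,1]^2)}^2=\int_{-1}^{1}\norm{g_y-\Pi_M g_y}_{L^2([-1,1])}^2\,\d y,\qquad g_y:=f(\cdot,y)\in\bb_c,$$
and, since $\{\widetilde{P}_n\}$ is an orthonormal basis of $L^2([-1,1])$, Parseval's identity turns the inner norm into $\sum_{n\ge M}|a_n(g_y)|^2$, where $a_n(g):=\langle g,\widetilde{P}_n\rangle_{L^2([-1,1])}$. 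Thus it is enough to bound $|a_n(g)|$ for an arbitrary $g\in\bb_c$, super-exponentially in $n$ as soon as $n\ge M>ec/2$, and then to integrate the resulting bound over $y\in[-1,1]$; the norm $\norm{f}$ on the right-hand side will appear through $\int_{-1}^{1}\norm{g_y}^2\,\d y$.

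The next step is an integral representation of $a_n(g)$ that makes the band limitation visible. Writing $g(x)=\frac{1}{\sqrt{2\pi}}\int_{-c}^{c}\ff g(\xi)\,e^{ix\xi}\,\d\xi$ and using the classical identity
$$\int_{-1}^{1}e^{ix\xi}P_n(x)\,\d x=2\,i^{n}\,j_n(\xi),\qquad j_n(\xi):=\sqrt{\tfrac{\pi}{2\xi}}\,J_{n+1/2}(\xi),$$
one gets $a_n(g)=\frac{2\,i^{n}}{\sqrt{2\pi}}\sqrt{n+\tfrac12}\int_{-c}^{c}\ff g(\xi)\,j_n(\xi)\,\d\xi$. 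Applying Cauchy--Schwarz on $[-c,c]$ and Plancherel's theorem (so that $\int_{-c}^{c}|\ff g|^2=\norm{g}_{L^2(\R)}^2$, as $\ff g$ is supported in $[-c,c]$) yields
$$|a_n(g)|^2\le\frac{2}{\pi}\Big(n+\tfrac12\Big)\,\norm{g}_{L^2(\R)}^2\int_{-c}^{c}j_n(\xi)^2\,\d\xi.$$

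Now I would estimate the last integral using the standard Bessel bound $|J_\nu(\xi)|\le\frac{|\xi/2|^{\nu}}{\Gamma(\nu+1)}$ with $\nu=n+\tfrac12$, i.e.\ $j_n(\xi)^2\le\frac{\pi}{2^{2n+2}\,\Gamma(n+\tfrac32)^2}\,|\xi|^{2n}$. Integrating over $[-c,c]$ and substituting back collapses the powers of $n+\tfrac12$ and leaves
$$|a_n(g)|^2\le\frac{(c/2)^{2n+1}}{\Gamma(n+\tfrac32)^2}\,\norm{g}_{L^2(\R)}^2.$$
An application of Stirling's inequality in the form $\Gamma(n+\tfrac32)\ge\sqrt{2\pi(n+\tfrac12)}\,\big((n+\tfrac12)/e\big)^{n+1/2}$ then recasts this as $|a_n(g)|^2\le\frac{ce}{4\pi(n+\frac12)^2}\big(\tfrac{ec}{2(n+\frac12)}\big)^{2n}\norm{g}_{L^2(\R)}^2$, up to an explicit constant. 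Under the hypothesis $M>ec/2$ the base $\frac{ec}{2(n+1/2)}$ is $<1$ and decreasing in $n$, so for $n\ge M$ the tail $\sum_{n\ge M}|a_n(g)|^2$ is majorised by a geometric series of ratio $\big(\tfrac{ec}{2M}\big)^2$; summing it produces the leading term $\big(\tfrac{ec}{2M}\big)^{2M}$ together with the factor $\big(1-(ec/2M)^2\big)^{-1}$, and the leftover half-integer powers of $c$ and $M$ combine with $\big(\tfrac{ec}{2M}\big)^{2M}$ to give $\big(\tfrac{ec}{2M}\big)^{2M+3}$ up to constants. Integrating over $y$ and taking a square root then delivers a bound of the announced shape, the precise numerical constant $\frac{2\sqrt{\pi}\,e^{-3/2}}{\sqrt{3}\,c}$ coming from carrying Stirling's formula and the geometric summation through with explicit constants.

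I expect the delicate point to be exactly this final bookkeeping, and it has two facets. First, extracting the sharp constant (rather than merely the correct super-geometric factor $\big(\tfrac{ec}{2M}\big)^{M+3/2}$) requires Stirling's formula with explicit error control and a tight geometric majorant for the tail. Second, and more substantively, the chain of inequalities above naturally produces the full-line norm $\int_{-1}^{1}\norm{g_y}_{L^2(\R)}^2\,\d y$ of the slices, which in general dominates $\norm{f}_{L^2([-1,1]^2)}$; passing to the right-hand side stated in the proposition therefore calls either for a sharper estimate of the Legendre coefficients of functions of $\bb_c$ restricted to $[-1,1]$, or for using additional structure of $f$. The remaining manipulations are routine.
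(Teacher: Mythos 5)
Your overall skeleton coincides with the paper's: since $\Pi_M$ acts in the first variable only, Parseval in $x$ reduces everything to the tail $\sum_{k\ge M}\langle f(\cdot,y),\widetilde P_k\rangle^2$, one then needs a super-exponential bound on the Legendre coefficients of a function of $\bb_c$, and the condition $M>ec/2$ lets one majorise the tail by a geometric series of ratio $(ec/2M)^2$, producing exactly the factors $(ec/2M)^{M+3/2}$ and $\bigl(1-(ec/2M)^2\bigr)^{-1/2}$. Where you differ is in how the one-dimensional coefficient bound is obtained: the paper simply invokes Lemma~3.2 of \cite{JATpaper}, which delivers
$|\langle f(\cdot,y),P_k\rangle_{L^2(-1,1)}|\le \frac{\sqrt{2\pi}e^{-3/2}}{c}\left(\frac{ec}{2k}\right)^{k+3/2}\|f(\cdot,y)\|_{L^2(-1,1)}$,
i.e.\ a bound in terms of the norm of the \emph{restriction} of the slice to $[-1,1]$, whereas you rederive the estimate from scratch via the identity $\int_{-1}^1 e^{ix\xi}P_n(x)\,dx=2i^nj_n(\xi)$, Cauchy--Schwarz on $[-c,c]$ and the standard bound on $J_{n+1/2}$. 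That derivation is correct as far as it goes and is a nice self-contained route to the super-geometric decay, but it inevitably produces $\|f(\cdot,y)\|_{L^2(\R)}$ on the right-hand side.

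This is the genuine gap, and you identify it yourself without resolving it: integrating your bound in $y$ gives $\int_{-1}^1\|f(\cdot,y)\|_{L^2(\R)}^2\,dy$, which dominates $\|f\|_{L^2([-1,1]^2)}^2$, and the inequality goes the wrong way for the stated conclusion. There is moreover no uniform constant converting $\|g\|_{L^2(\R)}$ into $\|g\|_{L^2(-1,1)}$ over $\bb_c$: the ratio $\|Dg\|_{L^2}^2/\|g\|_{L^2(\R)}^2$ equals $\langle \qq_c' g,g\rangle/\|g\|^2$ for the time--frequency limiting operator, whose eigenvalues accumulate at $0$, so the ratio can be made arbitrarily small. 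Hence your argument, as written, proves the proposition only with $\left(\int_{-1}^1\|f(\cdot,y)\|_{L^2(\R)}^2\,dy\right)^{1/2}$ in place of $\|f\|_{L^2([-1,1]^2)}$, which is a strictly weaker statement. To close the gap you need precisely the sharper interval-norm version of the coefficient estimate that the paper imports from \cite{JATpaper} (or an independent proof of it); your suggested alternatives are the right diagnosis, but one of them must actually be carried out. The remaining bookkeeping (normalisation $\widetilde P_k=\sqrt{k+1/2}\,P_k$ absorbed into the constant $b=2\sqrt{\pi}e^{-3/2}/\sqrt{3}$, and the geometric summation) matches the paper and is fine.
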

	
			\begin{proof} 
				We have 
				$$ (f -\Pi_M (f))(x,y) =\sum_{k \geq M} <f(.,y), \wJ_{k}(.)> {\wJ_k}(x).$$
			
				{
					By integrating with respect to '$x$', one gets
					\begin{equation}
						\label{eq:r_M_norm_yfixed0}
							\int_{x \in [-1,1]} |(f -\Pi_M (f))(x,y)|^2  dx=
							||(f -\Pi_M (f))(\cdot,y)||_{L^2(-1,1)}^2  
							= S_1+S_2
						\end{equation}
					with
					\begin{equation}
							S_1=\sum_{k\neq j \geq M} <f(.,y), \wJ_{k}(.)>  <f(.,y), \wJ_{j}(.)>
							\int_{-1}^1 \wJ_k(x) \wJ_j(x) dx =0
						\end{equation}
					and
					\begin{equation}
						S_2= 
							\sum_{k \geq M} <f(.,y), \wJ_{k}(.)>^2	\int_{-1}^1 \wJ_k(x)^2 dx=\sum_{k \geq M} <f(.,y), \wJ_{k}(.)>^2.
					\end{equation}
				Then we get
				\begin{equation}
						\label{eq:r_M_norm_yfixed}
					\int_{x \in [-1,1]} |(f -\Pi_M (f))(x,y)|^2  dx=\sum_{k \geq M} <f(.,y), \wJ_{k}(.)>^2.
				\end{equation}
					Using Lemma 3.2 in \cite{JATpaper} in the particular case of Legendre polynomials, we can write, for {$f \in B_c$ and for} fixed $y$ 
					$$|<f(\cdot,y), P_{k}(\cdot)>_{L^2(-1,1)} | \leq 
					\frac{\sqrt{2\pi}e^{-3/2}}{c}
					\left(\frac{ec}{2k}\right)^{k+3/2} ||f(\cdot,y)||_{L^2(-1,1)}. $$
					Then 
					\begin{equation}
						\label{eq:scalar_prod_upbound}
						\begin{aligned}
							|<f(\cdot,y), \wJ_{k}(\cdot)>_{L^2(-1,1)} | & \leq 
							\frac{\sqrt{2\pi}e^{-3/2}}{c} \frac{1}{\sqrt{k+1/2}}
							\left(\frac{ec}{2k}\right)^{k+3/2} ||f(\cdot,y)||_{L^2(-1,1)}\\
							& \leq 
							\frac{b}{c} 
							\left(\frac{ec}{2k}\right)^{k+3/2} ||f(\cdot,y)||_{L^2(-1,1)}.
						\end{aligned}
					\end{equation}
					with $b=\frac{2\sqrt{\pi}e^{-3/2}}{\sqrt{3}}. $
					By combining these results and integrating with respect to '$y$', one obtains
					$$ ||f- \Pi_M (f)||_{L^2([-1,1]^2)}^2 \leq
					\left(  \frac{b}{c} ||f||_{L^2([-1,1]^2)} \right)^2
					\sum_{k \geq M} 
					\left(\frac{ec}{2k}\right)^{2k+3}. 
					$$
				}
				It follows from the condition $M>ec/2$ that
				\begin{equation}
					\begin{aligned}
						||f -\Pi_M (f)||_{L^2([-1,1]^2)}^2 & \leq  
						\left(  \frac{b}{c} ||f||_{L^2([-1,1]^2)} \right)^2 
						\sum_{k \geq M} 
						\left(\frac{ec}{2M}\right)^{2k+3}\\
						& =
						\left(  \frac{b}{c} ||f||_{L^2([-1,1]^2)}  \left(\frac{ec}{2M}\right)^{M+3/2} \right)^2 
						\frac{1}{ 1-\left(\frac{ec}{2M}\right)^{2}}.
					\end{aligned}
				\end{equation}
			\end{proof}
			{
				The following result is an immediate consequence of
				Proposition~\ref{prop:R_M_c_const} and Proposition~\ref{prop1}.
				\begin{proposition}~\\
					\label{th:R_M_c_const}
				We denote by $||.||_{HS}$ the Hilbert-Schmidt matrix norm. We have for  $M > \frac{ec}{2}$
					$$\E \left[\norm{R_N^{(M)}}_{HS}\right] \leq N(M+1)
					\frac{2\sqrt{\pi}e^{-3/2}}{\sqrt{3}c}
					\left(\frac{ec}{2M}\right)^{M+\frac{3}{2}}  \frac{1}{\sqrt{1-\left(\frac{ec}{2M}\right)^2}}  ||f||_{L^2([-1,1]^2)}.$$
					Therefore
					$$\lim_{M \to \infty} \E \left[\norm{R_N^{(M)}}_{HS}\right]=0.  $$
				\end{proposition}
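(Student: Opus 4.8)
The statement is obtained by chaining the two preceding estimates, so the plan is essentially one of substitution followed by a short asymptotic argument. First I would apply Lemma~\ref{prop:R_M_c_const}, which holds for every $M$ and every $f \in L^2([-1,1]^2)$, to get $\E\!\left[\norm{R_N^{(M)}}_{HS}\right] \le N(M+1)\,\norm{r_M(f)}_{L^2([-1,1]^2)}$. Next, since the standing assumption of this subsection ($f \in \bb_c$) is exactly the hypothesis required in Proposition~\ref{prop1} — namely that $x \mapsto f(x,y)$ lies in $\bb_c$ for each fixed $y$ — I would invoke Proposition~\ref{prop1} to bound $\norm{r_M(f)}_{L^2([-1,1]^2)}$ by $\frac{2\sqrt{\pi}e^{-3/2}}{\sqrt{3}\,c}\left(\frac{ec}{2M}\right)^{M+3/2}\bigl(1-(ec/2M)^2\bigr)^{-1/2}\norm{f}_{L^2([-1,1]^2)}$, which is valid as soon as $M > ec/2$. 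Multiplying this through by $N(M+1)$ gives the displayed inequality.

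It then remains to let $M \to \infty$ with $c$ and $N$ held fixed. Put $t_M := ec/(2M)$, so that $t_M \to 0$; hence $\bigl(1-t_M^2\bigr)^{-1/2} \to 1$ and stays bounded for $M$ large, while the constant $\frac{2\sqrt{\pi}e^{-3/2}}{\sqrt{3}\,c}\,\norm{f}_{L^2([-1,1]^2)}$ does not depend on $M$. The only nontrivial factor is $N(M+1)\,t_M^{M+3/2}$. Writing $t_M^{M} = \exp\!\bigl(M\log(ec/2) - M\log M\bigr)$, the exponent equals $-M\log M + O(M) \to -\infty$ faster than any polynomial, so $t_M^{M}$ decays faster than any power of $M$ and in particular dominates both the polynomial prefactor $N(M+1)$ and the bounded factor $t_M^{3/2}$. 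Consequently the right-hand side tends to $0$, which yields $\lim_{M \to \infty}\E\!\left[\norm{R_N^{(M)}}_{HS}\right] = 0$.

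Because each step is either a direct quotation of an already-established estimate or an elementary limit comparison, I do not expect a genuine obstacle here, and the statement is accurately described as an immediate consequence. The only points deserving a line of care are: (i) verifying that the blanket hypothesis $f \in \bb_c$ indeed supplies the per-slice condition $f(\cdot,y) \in \bb_c$ needed to apply Proposition~\ref{prop1}; and (ii) recording that the inequality is asserted only in the regime $M > ec/2$, which is harmless for the limit statement since we are free to take $M$ arbitrarily large while $c$ is fixed.
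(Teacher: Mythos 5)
Your proposal is correct and matches the paper exactly: the paper itself presents Proposition~\ref{th:R_M_c_const} as an immediate consequence of Lemma~\ref{prop:R_M_c_const} and Proposition~\ref{prop1}, obtained by the same substitution you perform, and your verification that $N(M+1)\left(\frac{ec}{2M}\right)^{M+3/2}\to 0$ (superpolynomial decay of $(ec/2M)^M$ beating the polynomial prefactor) correctly supplies the limit statement the paper leaves implicit. No issues.
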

				
			}
			
			The following result states that, with high probability, the eigenvalue distribution of the matrix $A_N$ are well approximated by those of $\wA$.  It is a consequence of Proposition~\ref{th:R_M_c_const} and the following McDiarmid's inequality (see Theorem 6.5 in \cite{Book:BLM2013}). \\
			{\bf  McDiarmid's concentration inequality}~\\
			Let $f : \xx_1 \times \ldots \times \xx_n \to \R$ satisfy the bounded differences property with bounds $c_1, \ldots, c_n$ {i.e., $\forall \ i \in \{1,\ldots,n\}$
				$$\sup_{x_1,\ldots,x-n,x_i'} \ 
				|f(x_1,\ldots, x_n) - f(x_1,\ldots,x_i',\ldots,x_n)| \leq c_i. $$}
			Consider independent random variables $X_1 \in \xx_1, \cdots, X_n \in \xx_n$. Then for any $\varepsilon>0$,
			$$ \P\left(| f(x_1,\ldots, x_n) - \E(f(x_1,\ldots, x_n)) > \varepsilon |\right) \leq 2 \exp \left(-\frac{2\varepsilon^2}{c_1^2 + \ldots + c_n^2}\right).$$
			\begin{theorem}\label{approx_vp}
				 Let $c>0$, $\varepsilon >0$ and ${f \in B_c}$.
				For $M > \frac{ec}{2}$, there exists $N_\varepsilon$ such that for all $N>N_\varepsilon$, the eigenvalues of the matrix $A_N$ satisfy
				\begin{equation}
					|\lambda_j(A_N)|  \leq \epsilon
					+ 	 N(M+1)		\frac{2\sqrt{\pi}e^{-3/2}}{\sqrt{3}c} \left(\frac{ec}{2M}\right)^{M+\frac{3}{2}}
					\frac{1}{\sqrt{1-\left(\frac{ec}{2M}\right)^2}}
					||f||_{L^2([-1,1]^2)}
					+ |\lambda_{j}(\wA)|.
				\end{equation}
				with a probability at least $1-2\exp\left(-\frac{2\epsilon^2}{D}\right)$ with $D:=\frac{64 \pi  B^2}{3 e^3 c^2} \left(\frac{ec}{2M}\right)^{2M+3}
				\left[\frac{(M+1)(2N-1)}{1 - \frac{ec}{2M}}\right]^2$ and $B=\norm{f}_\infty$.
			\end{theorem}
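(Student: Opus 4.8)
The plan is to turn the $L^2$--type control of $R_N^{(M)}=A_N-\wA$ into a spectral statement by a perturbation argument, and then to upgrade the bound on $\E\norm{R_N^{(M)}}_{HS}$ from Proposition~\ref{th:R_M_c_const} into a high--probability statement via McDiarmid's inequality. Since $A_N$ and $\wA$ are symmetric with $A_N-\wA=R_N^{(M)}$ (Proposition~\ref{prop:A_Decomposition}), Weyl's perturbation theorem gives, for every index $j$,
$$|\lambda_j(A_N)|\le |\lambda_j(\wA)|+\norm{A_N-\wA}_{op}\le |\lambda_j(\wA)|+\norm{R_N^{(M)}}_{HS},$$
because the operator norm is dominated by the Hilbert--Schmidt norm. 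So it is enough to prove that, on an event of probability at least $1-2\exp(-2\epsilon^2/D)$,
$$\norm{R_N^{(M)}}_{HS}\le \epsilon+N(M+1)\,\frac{2\sqrt{\pi}e^{-3/2}}{\sqrt{3}c}\Big(\frac{ec}{2M}\Big)^{M+3/2}\frac{1}{\sqrt{1-(ec/2M)^2}}\,\norm{f}_{L^2([-1,1]^2)}.$$

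The next step is to split $\norm{R_N^{(M)}}_{HS}=\E\norm{R_N^{(M)}}_{HS}+\big(\norm{R_N^{(M)}}_{HS}-\E\norm{R_N^{(M)}}_{HS}\big)$. By Proposition~\ref{th:R_M_c_const} (applicable since $M>ec/2$ and $f\in B_c$) the expectation is at most the deterministic term above, so it remains to estimate $\P\big(\norm{R_N^{(M)}}_{HS}-\E\norm{R_N^{(M)}}_{HS}>\epsilon\big)$. I would apply McDiarmid's inequality to the map $g(x_1,\ldots,x_N)=\norm{R_N^{(M)}}_{HS}$ of the i.i.d.\ inputs $X_1,\ldots,X_N\in[-1,1]$. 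The key input is the bounded--differences property: replacing a single coordinate $X_\ell$ by $X_\ell'$ changes only the $\ell$--th row and $\ell$--th column of $R_N^{(M)}$, that is $2N-1$ entries, so by the triangle inequality for $\norm{\cdot}_{HS}$ the increment of $g$ is at most the Hilbert--Schmidt norm of that sparse difference. The size of the affected entries is controlled through the decomposition of $R_N^{(M)}$ in Proposition~\ref{prop:A_Decomposition} --- whose entries are governed by $r_M(f)$ (the sum $\sum_{k=0}^{M-1}\langle r_M(\cdot,X_j),\phi_k\rangle\phi_k(X_i)$ in fact vanishing by orthogonality, so that $R_N^{(M)}(i,j)=r_M(f)(X_i,X_j)$, though keeping its $(M+1)$--term form only inflates constants) --- using the Legendre--coefficient decay \eqref{eq:scalar_prod_upbound} of Proposition~\ref{prop1} together with $\norm{f}_\infty=B$ and $\norm{f(\cdot,y)}_{L^2(-1,1)}\le\sqrt2\,B$. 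Summing the $2N-1$ squared entry differences and then over $\ell\in\{1,\ldots,N\}$ bounds $\sum_\ell c_\ell^2$ above by the quantity $D$ in the statement (the constant $\tfrac{64\pi}{3e^3}=16b^2$ with $b=\tfrac{2\sqrt{\pi}e^{-3/2}}{\sqrt3}$, the factor $(M+1)^2$ from the term count in $R_N^{(M)}$, and $(2N-1)^2$ from the number of modified entries). McDiarmid then yields $\P(g-\E g>\epsilon)\le 2\exp(-2\epsilon^2/D)$, and combining this with the two displays above proves the asserted bound (the role of $N_\varepsilon$ being merely to single out the regime in which the stated estimate is the operative one).

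The main obstacle is the bounded--differences step: one must isolate the dependence of $R_N^{(M)}$ on one sample point, bound the corresponding row/column perturbation \emph{explicitly and uniformly in the remaining coordinates}, and check that the accumulated constant does not exceed $D$. The delicate point is that $r_M(f)(x,y)=\sum_{k\ge M}\langle f(\cdot,y),\wJ_k\rangle\wJ_k(x)$ has to be controlled uniformly on $[-1,1]^2$, so the endpoint growth $\norm{\wJ_k}_\infty=\sqrt{k+\tfrac12}$ of the normalized Legendre polynomials must be absorbed into the super--geometric decay $(ec/2k)^{k+3/2}$ supplied by Lemma~3.2 of \cite{JATpaper}; this is where the bookkeeping of absolute constants has to be done with care. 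Everything else --- the reduction through Weyl's theorem, the $L^2$ bound on $\E\norm{R_N^{(M)}}_{HS}$, and the final assembly of the probability estimate --- is routine.
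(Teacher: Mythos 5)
Your proposal follows essentially the same route as the paper's proof: Weyl's perturbation theorem together with $\norm{A_N-\wA}_2\le\norm{R_N^{(M)}}_{HS}$ reduces the claim to a concentration bound for $\norm{R_N^{(M)}}_{HS}$, which is then obtained exactly as in the paper by combining Proposition~\ref{th:R_M_c_const} for the mean with McDiarmid's inequality, the bounded-differences constants being controlled through the single-row/column perturbation and the Legendre-coefficient decay \eqref{eq:scalar_prod_upbound} with $\norm{f(\cdot,y)}_{L^2(-1,1)}\le\sqrt{2}\,B$. Your side remarks --- that the $\sum_{k=0}^{M-1}$ part of $R_N^{(M)}(i,j)$ vanishes by orthogonality, and that the growth $\norm{\wJ_k}_\infty=\sqrt{k+1/2}$ must be absorbed into the super-geometric decay when bounding $r_M$ pointwise --- are correct refinements of the paper's bookkeeping rather than deviations from its method.
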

			
			\begin{proof}
				By using the symmetry of $A$ and $\wA$, the 
				fact that $\rho(K) \leq ||K||$ for every matrix $K$ and every matrix norm
				combined with 
				Weyl's inequality (see  Theorem III.2.1 in \cite{Book:Bhatia97})   with $i=1$ and $1  \leq j \leq N$ gives
				\footnote{{We suppose that $\lambda_1(A) \geq \ldots  \geq\lambda_N(A)$.}} :
				\begin{equation}
					\label{eq:maj_Weyl}
					\begin{aligned}
						|\lambda_j(A)| \leq |\lambda_1(A-\wA)| + |\lambda_{j}(\wA)| & \leq ||A-\wA||_2+ |\lambda_{j}(\wA)| \\
						& =\rho(A-\wA)+ |\lambda_{j}(\wA)|\\
						& \leq ||A-\wA||_{HS} + |\lambda_{j}(\wA)|.
					\end{aligned}
				\end{equation}
				Then, we use McDiarmid's inequality to give an approximation of  $ \norm{R_N^{(M)}}_{HS}$ by its expectation with high probability. To do so, it suffices to prove that $\norm{R^{(M)}_N}$ satisfy the bounded difference property
				$$ \lvert ~||R_N^{(M)}||_{HS} - ||\widetilde{R}_N^{(M)}||_{HS}  \rvert \leq c_i$$
				where  $||R_N^{(M)}||_{HS}=||R_N^{(M)}(X_1,\ldots,X_i,\ldots,X_N)||_{HS}$, 
				$||\widetilde{R}_N^{(M)}||_{HS}=||R_N^{(M)}(X_1,\ldots,X'_i,\ldots,X_N)||_{HS}$
				and   $c_i$ is a constant to be determined. Note that it is sufficient to show that
				$$ \sum_{1 \leq k,l \leq N}   |R_N^{(M)}(k,l) - \widetilde{R}_N^{(M)}(k,l)| \leq c_i.$$
				For the case where $k \neq i$ and $l \neq i$, we have $R_N^{(M)}(k,l) - \widetilde{R}_N^{(M)}(k,l)=0$.\\
				For the case where $k=i$ with $l \neq i$, we have
				$$	  	 R_N^{(M)}(i,l) - \widetilde{R}_N^{(M)}(i,l)= \sum_{k=0}^{M-1} g_k(X_l) 
				\left[\wJ_k(X_i) - \wJ_k(X'_i) \right] + r_M(X_i,X_l)  - r_M(X'_i,X_l),
				$$
				with $$g_k(X_l):= <r_M(\cdot,X_l) ; \wJ_{k}(\cdot)> \leq ||r_M(\cdot,X_l)||_{L^2([-1,1])}. $$
				Therefore
				\begin{equation}
					\label{eq:R-R'_ineqj}
					| R_N^{(M)}(i,l) - \widetilde{R}_N^{(M)}(i,l)| \leq 2 M ||r_M(\cdot,X_l) ||_{L^2([-1,1])}
					+|r_M(X_i,X_l)|+   |r_M(X'_i,X_l)|. 
				\end{equation}
				Let us find an upper bound for the function $(x,y) \mapsto r_M(x,y)$. Using (\ref{eq:scalar_prod_upbound}) and the inequality $M>\frac{ec}{2}$, we get
				\begin{equation}
					\begin{aligned}
						|r_M(x,y)| & =|\sum_{k \geq M} < f(\cdot,y), \wJ_{k}(\cdot)> \wJ_{k}(x)|
						\leq  \sum_{k \geq M} |< f(\cdot,y), \wJ_{k}(\cdot)>|\\
						& \leq \frac{b}{c}  ||f(\cdot,y)||_{L^2(-1,1)} \sum_{k \geq M}
						\left(\frac{ec}{2k}\right)^{k+3/2} \\
						& \leq \frac{b}{c}  ||f(\cdot,y)||_{L^2(-1,1)}
						\left(\frac{ec}{2M}\right)^{M+3/2}\frac{1}{1 -\frac{ec}{2M}}
					\end{aligned}
				\end{equation} 
				with $b=\frac{2 \sqrt{\pi}e^{-3/2}}{\sqrt{3}}$. Since the function $f$ is upper bounded by $B$, we obtain
				\begin{equation}
					\label{eq:r_M_upper_bound}
					|r_M(x,y)| \leq \frac{2b B}{c} 	\left(\frac{ec}{2M}\right)^{M+3/2}\frac{1}{1 -\frac{ec}{2M}}.
				\end{equation}
				Now, we give interest to the quantity $||r_M(\cdot,X_l)||_{L^2([-1,1])}$. Using (\ref{eq:r_M_norm_yfixed}) and (\ref{eq:scalar_prod_upbound}), we have
				\begin{equation}
					||r_M(\cdot,X_l)||_{L^2([-1,1])}^2= 	\sum_{k \geq M} <f(.,y), \wJ_{k}(.)>^2
					\leq \sum_{k \geq M} \frac{b^2}{c^2} \left(\frac{ec}{2k}\right)^{2k+3} ||f(\cdot,y)||_{L^2(-1,1)}^2
				\end{equation} 
				Using again the boundedness of $f$, we obtain for $M>\frac{ec}{2}$
				\begin{equation}
					\label{eq:norm_rM_X_j_up-bound}
					||r_M(\cdot,X_l)||_{L^2([-1,1])}
					\leq  \frac{2b B}{c} \left(\frac{ec}{2M}\right)^{M+3/2} 
					\frac{1}{\sqrt{1-\left(\frac{ec}{2M}\right)^2}}
					\leq 
					\frac{2b B}{c} \left(\frac{ec}{2M}\right)^{M+3/2} 
					\frac{1}{1-\frac{ec}{2M}}.
				\end{equation} 
				By combining (\ref{eq:R-R'_ineqj}), (\ref{eq:r_M_upper_bound}) and (\ref{eq:norm_rM_X_j_up-bound}), we get when $l \neq i$
				\begin{equation}
					\label{eq:RM_i_l}
					| R_N^{(M)}(i,l) - \widetilde{R}_N^{(M)}(i,l)| \leq \frac{4  b B}{c}  \left(\frac{ec}{2M}\right)^{M+3/2} 
					\frac{M+1}{1-\frac{ec}{2M}}.
				\end{equation}
				For the case where $i=l$, we have
				$$
				| R_N^{(M)}(i,i) - \widetilde{R}_N^{(M)}(i,i)| \leq M ||r_M(\cdot,X_i)||_{L^2([-1,1])} + |r_M(X_i,X_i)|
				+  M ||r_M(\cdot,X'_i)||_{L^2([-1,1])} + |r_M(X'_i,X'_i)|.
				$$
				Using again (\ref{eq:r_M_upper_bound}) and (\ref{eq:norm_rM_X_j_up-bound}), we obtain
				\begin{equation}
					\label{eq:RM_i_i}
					| R_N^{(M)}(i,i) - \widetilde{R}_N^{(M)}(i,i)| \leq \frac{4 b B}{c}  \left(\frac{ec}{2M}\right)^{M+3/2} 
					\frac{M+1}{1-\frac{ec}{2M}}.
				\end{equation}
				Finally, by the symmetry of the matrices $R_N^{(M)}$ and $\widetilde{R}_N^{(M)}$, we have for $l \neq i $
				$$R_N^{(M)}(l,i) - \widetilde{R}_N^{(M)}(l,i)=R_N^{(M)}(i,l) - \widetilde{R}_N^{(M)}(i,l).$$
				The last equation combined with (\ref{eq:RM_i_l}) and (\ref{eq:RM_i_i}) gives
				$$ \sum_{1 \leq k,l \leq N}   |R_N^{(M)}(k,l) - \widetilde{R}_N^{(M)}(k,l)| \leq 
				\frac{4  b B}{c}  \left(\frac{ec}{2M}\right)^{M+3/2} 
				\frac{(M+1)(2N-1)}{1-\frac{ec}{2M}}
				.$$
			\end{proof}
		Following Theorem~\ref{approx_vp}, the next step of the study requires the estimation of the eigenvalues of the matrix $\wA$. In order to do so, we restrict ourselves to the sinc kernel case. 	
	\section{The sinc kernel case}
	\label{sec:sinc}
		In this section, we suppose that $A_N$ is the real symmetric $ N \times N $ matrix with entries $$ A_N(i,j)=\frac{\sin(c(X_i-X_j))}{c(X_i-X_j)}. $$
	Here $X_i$ are i.i.d. random points following the uniform law in $I=[-1,1]$.
Since the sinc function is  real, continuous, positive definite and even, then $A_N$ is a symmetric positive and semi-definite matrix.
Gershgorin’s theorem immediately implies that 
$Sp(\frac{A_N}{N}) \subset [0,1]$."
	\\
	In the following section, we provide some properties of the matrix $T_M$ that are valid for any basis $(\phi_{n})$.
	\subsection{General properties of the matrix $T_M$}
	\label{sec:prolate}
	 In order to derive the general properties of the matrix $T_M$, we will first recall some facts about the time-frequency limiting operator.\\ 
	Let $c>0$. Since, $\displaystyle K_c(x,y) = \frac{\sin(c(x-y))}{\pi(x-y)}$ is a Mercer's kernel, then it can be written as
	\begin{equation}\label{Mercer}
		K_c(x,y) = \sum_{n=0}^\infty \lambda_{n}(c) \psi_{n,c}(x) \psi_{n,c}(y).
	\end{equation} 
	Here $(\lambda_{n}(c))_{n \geq 0}$ are the eigenvalues of the Sinc integral operator related to the kernel $K_c$ that we denote $\qq_c$; and $(\psi_{n,c})_{n \geq 0}$ are the associated eigenfunctions  known in the literature as Slepian's functions or prolate spheroidal wave functions (PSWFs) For more details on these functions we refer reader to \cite{Sle1,Wang4}.
	Let us consider the Fredholm integral equation : 
	$$ \mbox{Find $\lambda \in \R$ and $\psi\in L^2([-1,1])$ :} \ \qq_c.\psi(x)=\int_{-1}^1 \frac{\sin c(x-t)}{\pi(x-t)} \psi(t) dt = \lambda \psi(x), \quad x\in(-1,1).$$ 
	The operator $\qq_c$ is of type Hilbert-Schmidt. Then, it has a countable set of eigenvalues and corresponding eigenfunctions satisfying the following properties:
	\begin{enumerate}
		\item The eigenvalues $\left(\lambda_{n}(c)\right)_{n\geq 0}$ are simple, real and positive. They can be ordered as 
		$$ 1>\lambda_{0}(c) > \lambda_{1}(c) > \cdots > \lambda_{n}(c) > \cdots >0. $$
		\item $\lim_{n \to \infty}\lambda_{n}(c) = 0.$
		\item The corresponding eigenfunctions $\left(\psi_{n,c}\right)_{n\geq 0}$ are orthogonal and form a complete system in $L^2(-1,1)$.
	\end{enumerate}

	We also note that $\qq_c$ writes as $\frac{2c}{\pi}\mathcal{F}^*_c \mathcal{F}_c$
where 
$\ff_c: L^2([-1,1]) \to L^2([-1,1])$ is given by
$$ \ff_c(f)(x)=\int_{-1}^1 f(t) e^{-icxt} dt= {\sqrt{2\pi}}  \ff({\mathds 1}_{[-1,1]}f)(cx), \ f \in L^2([-1,1]).$$
 We can then enounce the following properties on the coefficients of the infinite deterministic matrix $T_{M}$ that hold regardless of the basis in which $T_{M}$ is expressed. 
\begin{proposition}
	For any orthonormal basis $(\phi_{n})_{n \geq 0}$, the matrix $T_{M}$ has the following properties:
	\begin{enumerate}
		\item $\forall \ k,j \in \{1,\ldots,M\}$, we have 
			\begin{equation}\label{expr_T}
				\displaystyle T_{M}(k,j) =
				\frac{ \pi }{c}<\qq_c\phi_{k-1},\phi_{j-1}>= \frac{ \pi }{c}   <\mathcal{F}_c\phi_{k-1}, \mathcal{F}_c\phi_{j-1}>.
		\end{equation}
		\item $\forall\  k,j \in \{1,\ldots,M\}$, we have 
		\begin{equation}
			\label{expr2_T}
			T_{M}(k,j)= \frac{\pi}{c} <DB_cD \phi_{k-1},\phi_{j-1} >_{L^2(\R)}.
		\end{equation}
		\item $T_{M}$ is positive.
		\item For all $k,j \in \{1,\ldots,M\}$
		\begin{equation}
			\label{eq:Tkj}
			T_{M}(k,j) = \frac{\pi}{c}\sum_{n=0}^\infty \lambda_n(c) \beta^n_{k-1}(c)\beta^n_{j-1}(c)
		\end{equation}
		where $\lambda_n(c)$ are  the eignevalues of $\qq_c$ and $\displaystyle \beta^n_k(c) = \int_{-1}^1 \psi_{n,c}(x) \phi_k(x) dx. $
	\end{enumerate}
\end{proposition}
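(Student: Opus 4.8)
The starting point is the observation that, in the sinc setting, the kernel defining $T_M$ is just a rescaling of the Mercer kernel $K_c$: since $\dfrac{\sin(c(x-y))}{c(x-y)}=\dfrac{\pi}{c}K_c(x,y)$, Definition~\ref{def:H_T} gives
\[
T_M(k,j)=\frac{\pi}{c}\int_{-1}^{1}\int_{-1}^{1}K_c(x,y)\,\phi_{k-1}(x)\,\phi_{j-1}(y)\,dx\,dy ,
\]
and the double integral is absolutely convergent ($K_c$ is bounded on $[-1,1]^2$ and $\phi_{k-1},\phi_{j-1}\in L^1([-1,1])$), so Fubini may be used freely. For item~(1) I would integrate first in $x$ and use the symmetry $K_c(x,y)=K_c(y,x)$ to recognise the inner integral as $(\qq_c\phi_{k-1})(y)$; integrating in $y$ then yields $T_M(k,j)=\frac{\pi}{c}\langle\qq_c\phi_{k-1},\phi_{j-1}\rangle$. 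The second equality then comes from substituting the factorisation of $\qq_c$ in terms of $\ff_c$ recalled above and moving one copy of $\ff_c$ to the other argument by adjointness, $\langle\ff_c^{*}\ff_c\phi_{k-1},\phi_{j-1}\rangle=\langle\ff_c\phi_{k-1},\ff_c\phi_{j-1}\rangle$; the only point requiring care is tracking the multiplicative constant that relates $\qq_c$ and $\ff_c^{*}\ff_c$.

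For item~(2) the plan is to identify the operator $\qq_c$, acting on $L^2([-1,1])$, with the time--band--time limiting operator $DB_cD$ acting on $L^2(\R)$. Regard $\phi_{k-1}$ as an element of $L^2(\R)$ supported in $[-1,1]$, so that $D\phi_{k-1}=\phi_{k-1}$; using that $B_c$ is convolution with the kernel $\dfrac{\sin(ct)}{\pi t}$ (which follows from $B_cf=\ff^{-1}\mathds{1}_{[-c,c]}\ff f$ and the convolution theorem) one gets $(B_cD\phi_{k-1})(x)=\int_{-1}^{1}\dfrac{\sin(c(x-t))}{\pi(x-t)}\phi_{k-1}(t)\,dt$, and the outer $D$ restricts this to $x\in[-1,1]$, where it agrees with $(\qq_c\phi_{k-1})(x)$. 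Hence $\langle DB_cD\phi_{k-1},\phi_{j-1}\rangle_{L^2(\R)}=\langle\qq_c\phi_{k-1},\phi_{j-1}\rangle_{L^2([-1,1])}$, and (2) follows from (1). I expect this operator identification, together with the correct interpretation of $B_c$ as a convolution, to be the only genuinely delicate step; everything else is essentially bookkeeping.

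Item~(3) is immediate from item~(1): for any $a=(a_1,\dots,a_M)^{T}\in\R^{M}$, putting $g=\sum_{k=1}^{M}a_k\phi_{k-1}\in L^2([-1,1])$ one has $a^{T}T_M a=\frac{\pi}{c}\langle\qq_c g,g\rangle\ge 0$ since all eigenvalues $\lambda_n(c)$ of $\qq_c$ are positive (equivalently, the second form of~(1) displays $T_M$ as a positive multiple of the Gram matrix of $\ff_c\phi_0,\dots,\ff_c\phi_{M-1}$); strict positivity for $a\ne 0$ follows from the injectivity of $\qq_c$ and the linear independence of the $\phi_{k-1}$. Finally, for item~(4) I would expand $\phi_{k-1}$ in the orthonormal basis $\{\psi_{n,c}\}_{n\ge 0}$ of $L^2([-1,1])$ and apply the spectral decomposition $\qq_c\psi_{n,c}=\lambda_n(c)\psi_{n,c}$ to get $\qq_c\phi_{k-1}=\sum_{n\ge 0}\lambda_n(c)\,\beta^{n}_{k-1}(c)\,\psi_{n,c}$ with $\beta^{n}_{k-1}(c)=\langle\phi_{k-1},\psi_{n,c}\rangle$; pairing with $\phi_{j-1}$ and multiplying by $\pi/c$ as in~(1) gives \eqref{eq:Tkj}. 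Alternatively, one may substitute the Mercer expansion~\eqref{Mercer} directly into the double integral above and interchange summation and integration, which is justified by the uniform convergence of Mercer's series on $[-1,1]^2$.
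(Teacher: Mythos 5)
Your proposal is correct and follows essentially the same route as the paper: item (1) by direct computation from the definition of $T_M$, item (2) by identifying $\qq_c$ with $DB_cD$ (which the paper dismisses as ``immediate'' but you rightly spell out via the convolution representation of $B_c$), item (3) by writing $a^{T}T_M a=\frac{\pi}{c}\langle\qq_c g,g\rangle$ for $g=\sum_k a_k\phi_{k-1}$, and item (4) by substituting the Mercer expansion \eqref{Mercer} into \eqref{expr_T}. The only differences are that you supply more justification (Fubini, the adjointness step, strict positivity) than the paper does, which is harmless.
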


\begin{proof}~\\
	\begin{enumerate}
		\item  We have:
		\begin{equation}
			\begin{aligned}
				\displaystyle T_{M}(k,j) & =  \int_{-1}^{1} \int_{-1}^1 \frac{\sin(c(x-y))}{c(x-y)} \phi_{k-1}(x) \phi_{j-1}(y) dx dy\\
				&=
				\frac{\pi }{c}<\qq_c\phi_{k-1},\phi_{j-1}>= \frac{\pi }{c}   <\mathcal{F}_c\phi_{k-1}, \mathcal{F}_c\phi_{j-1}>.
			\end{aligned}
		\end{equation}
		\item The proof of the second point is immediate.
		\item Let $x \in \R^M$, we have
		\begin{equation}
			\begin{aligned}
				<T_M x,x>=\sum_{1 \leq i,j \leq M} T_M(i,j) x_i x_j & = 
				\sum_{1 \leq i,j \leq M} \frac{ \pi}{c} <\qq_c\phi_{i-1}, \phi_{j-1}> x_i x_j\\
				&	= \frac{\pi}{c}\sum_{1 \leq i,j \leq M}  <\qq_c \left(x_i \phi_{i-1}\right), \left(x_j\phi_{j-1}\right)>. 
			\end{aligned}
		\end{equation}
		Let $\displaystyle \widetilde{\phi}:= \sum_{i=1}^M x_i \phi_{i-1} \in L^2([-1,1])$. We obtain
		$$	<T_Mx,x>= \frac{\pi}{c}  < \qq_c \left(\sum_{i=1}^{M}x_i \phi_{i-1}\right), \sum_{j=1}^M \left(x_j\phi_{j-1}\right)>
		=\frac{ \pi}{c}<\qq_c \widetilde{\phi},\widetilde{\phi}> \geq 0.$$
		The positivity of the last term is due to the positivity of the operator $\qq_c$.
		\item This point results from the combination of  \eqref{Mercer} and \eqref{expr_T}.
	\end{enumerate}
\end{proof}

{By \eqref{expr_T}, we see that the infinite matrix $ \displaystyle T_{\infty}=\lim_{M\rightarrow +\infty} T_M$ can be seen as the matrix of the operator $\qq_c$ in the orthonormal basis $(\phi_n)_n$. Obviously, the ideal choice of such a basis is the one that diagonalizes $\qq_c$ which is the  PSWFs basis. In this special case, the matrix $T_M$ is diagonal and its diagonal contains the values $\frac{\pi}{c}\lambda_{0}(c),\ldots, \frac{\pi}{c}\lambda_{M-1}(c)$. In the following section, we study the eigenvalues distribution of the corresponding estimator $\wA$.

	\subsection{Eigenvalues distribution of $A_N$ in the case where $c$ is fixed}
	As the PSWFs basis diagonalizes the operator under consideration, we choose to represent our estimator in this basis. 
	It is interesting to note that Theorem \ref{approx_vp} is still valid when the estimator $\wA$ is expressed in the basis of prolate spheroidal wave functions (see for example Proposition 3 in \cite{BK17}).\\
	In this case, the matrix $T_M$ is diagonal and positive.  We have then the following result on the eigenvalues of the estimator $\wA$. 
	
	\begin{theorem}\label{THN>M}
		Let $A_N$ be the sinc kernel random matrix.	
		Suppose that the sample size is { larger than the trunction index  ($N  \geq M$)}, then the eigenvalues of $\wA$ expressed in the PSWFs basis are such that
		\begin{itemize}
			\item $0$ of multiplicity $N-M$. 
			\item The eigenvalues different from zero of
			{
			 $\frac{1}{N}\wA$ converge to the eigenvalues of $T_M$, almost surely,  as $N$ grows to infinity}.
		\end{itemize}
		
	\end{theorem}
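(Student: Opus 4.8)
The plan is to exploit the factorisation $\wA=HT_MH^{T}$, where $H\in\R^{N\times M}$ has entries $H(i,m)=\psi_{m-1,c}(X_i)$ and, in the PSWF basis, $T_M$ is the diagonal positive-definite matrix with diagonal entries $\tfrac{\pi}{c}\lambda_{0}(c),\dots,\tfrac{\pi}{c}\lambda_{M-1}(c)$ (Section~\ref{sec:prolate}). Since $\mathrm{rank}(\wA)\le\mathrm{rank}(H)\le M$, the matrix $\wA$ has at least $N-M$ zero eigenvalues; to obtain multiplicity \emph{exactly} $N-M$ one shows that, almost surely, the $M\times M$ matrix $H_0:=\big(\psi_{m-1,c}(X_i)\big)_{1\le i,m\le M}$ is invertible --- since $H_0$ is a submatrix of $H$ for every $N\ge M$, this forces $\mathrm{rank}\,H=M$. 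Here I would invoke that $\psi_{0,c},\dots,\psi_{M-1,c}$ are linearly independent real-analytic functions on $(-1,1)$: by a standard argument $(x_1,\dots,x_M)\mapsto\det H_0(x_1,\dots,x_M)$ is a real-analytic function that is not identically zero, hence its zero set is Lebesgue-null, and since the $X_i$ are absolutely continuous the event $\{\det H_0=0\}$ has probability zero. (For the Legendre basis this determinant is, up to a nonzero multiplicative constant, the Vandermonde determinant $\prod_{i<j}(X_j-X_i)$, so the claim is completely explicit.)

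Next I would isolate the nonzero spectrum. By the elementary fact that $PQ$ and $QP$ have the same nonzero eigenvalues with multiplicities, applied with $P=H$ and $Q=T_MH^{T}$, the nonzero eigenvalues of $\wA=H\,(T_MH^{T})$ coincide with those of the $M\times M$ matrix $(T_MH^{T})H=T_MH^{T}H$, equivalently with the eigenvalues of the symmetric positive semidefinite matrix $T_M^{1/2}(H^{T}H)T_M^{1/2}$. Dividing by $N$, the $M$ largest eigenvalues of $\tfrac1N\wA$ are exactly the eigenvalues of $S_N:=T_M^{1/2}\big(\tfrac1N H^{T}H\big)T_M^{1/2}$, while the remaining $N-M$ are zero.

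It remains to pass to the limit in this fixed-size matrix. The entries of $\tfrac1N H^{T}H$ are empirical averages $\tfrac1N\sum_{i=1}^{N}\psi_{m-1,c}(X_i)\psi_{n-1,c}(X_i)$ of i.i.d.\ bounded random variables, so by the strong law of large numbers $\tfrac1N H^{T}H\to G$ almost surely, where $G_{mn}=\E[\psi_{m-1,c}(X_1)\psi_{n-1,c}(X_1)]$ is the Gram matrix of $(\psi_{n,c})$ against the sampling law; with the normalisation adopted here this Gram matrix equals $I_M$. Hence $S_N\to T_M^{1/2}I_MT_M^{1/2}=T_M$ almost surely, and since the ordered-eigenvalue map $B\mapsto(\lambda_1(B)\ge\cdots\ge\lambda_M(B))$ is continuous on symmetric matrices (indeed $1$-Lipschitz, by Weyl's inequality), the eigenvalues of $S_N$ converge almost surely to those of $T_M$, that is, to $\tfrac{\pi}{c}\lambda_0(c),\dots,\tfrac{\pi}{c}\lambda_{M-1}(c)$. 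Working on the intersection of the two almost-sure events above (invertibility of $H_0$ and convergence of $\tfrac1N H^{T}H$) one also sees that $S_N$ is eventually invertible, so for $N$ large the $M$ nonzero eigenvalues of $\tfrac1N\wA$ are genuinely positive and $0$ has multiplicity exactly $N-M$; for smaller $N\ge M$ the exact multiplicity follows from the invertibility of $H_0$ alone.

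The only genuinely delicate ingredient is the almost-sure invertibility of $H_0$ --- it relies on analyticity and linear independence of the PSWFs together with the non-atomicity of the sampling law --- while the rest reduces to the strong law of large numbers and continuity of the spectrum. A secondary point to keep track of is matching the normalisation of the orthonormal basis to the sampling measure, so that $\tfrac1N H^{T}H$ indeed converges to $I_M$ rather than to a nontrivial multiple of it.
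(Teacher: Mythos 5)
Your argument is essentially the paper's own proof: factor $\wA=HT_MH^{T}$, use that $PQ$ and $QP$ share nonzero eigenvalues to reduce to the $M\times M$ matrix $T_M^{1/2}(\tfrac1N H^{T}H)T_M^{1/2}$, and apply the strong law of large numbers together with continuity of the spectrum. Your two refinements --- the almost-sure full-rank argument for $H$ (which the paper silently assumes when asserting that $0$ has multiplicity exactly $N-M$) and the explicit flag that the sampling density must match the normalisation so that $\tfrac1N H^{T}H\to I_M$ --- are welcome additions rather than a different route.
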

	\begin{proof}
		In this case $T_M$ is diagonal and definite positive (since its diagonal contains the positive values $\frac{\pi}{c}\lambda_0(c),\ldots,\frac{\pi}{c}\lambda_{M-1}(c)$). Therefore, the matrix  $ \wA = HT_M H^T$ is also definite positive. We first notice that 
		$$ \wA =  \left(T_M^{1/2}H_M^T\right)^T\left(T_M^{1/2}H^T\right) = B^TB$$
		with $B= T_M^{1/2}H^T \in \R^{N \times M}$.
		Therefore, the eigenvalues of $B^TB$ different from zeros are the same as the eigenvalues of $BB^T \in \R^{M\times N}$. Consequently, the eigenvalues of $\wA$ are:
		\begin{itemize}
			\item $0$ of multiplicity $N-M$.
			\item the eigenvalues of $BB^T = T_M^{1/2}H^T H T_M^{1/2}$.
		\end{itemize}
		Let us compute $B B^T$. 	We use the following notations:
		
		$$ H^T = \left[H_1^T,H_2^T, \cdots , H_N^T\right], \qquad \mbox{with} \quad H_i^T = \left[\varphi_0(X_i), \cdots , \varphi_{M-1}(X_i)\right]^T, \quad \forall \ i=1,\ldots,N.$$
		It follows that
		$$ B=T_M^{1/2}H^T = \left[T_M^{1/2}H_1^T, \cdots, T_M^{1/2}H_N^T\right] = \left[B_1, \cdots, B_N\right]. $$
		Since the matrix expressed in the PSWFs basis $T_M$ is diagonal, one gets 
		$$\left(B_i B_i^T\right)(k,\ell) = \frac{\pi}{c}\sqrt{\lambda_{k-1} \lambda_{\ell-1}} \psi_{k-1,c}(X_i) \psi_{\ell-1,c}(X_i),$$
		where the $(\lambda_k(c))$ are the eigenvalues of the integral operator $\qq_c$  and $(\psi_{k,c})$ are the associated PSWFs functions. By the orthogonality of the $\psi_{k,c}$, one gets 
		$$ \mathbb{E}\left[B_i B_i^T\right] = T_M.$$
		Noticing that $\displaystyle \frac{1}{N}BB^T=\frac{1}{N}\sum_{i=1}^N B_i  B_i^T$, we deduce  
 by the strong law of large numbers, as $N\to \infty$ and for fixed $M$, that
		$$ \frac{1}{N }B B^T =\frac{1}{N}\sum_{i=1}^N B_i B_i^T \longrightarrow \mathbb{E}[B_1B_1^T]= T_M. $$	
	\end{proof}
	
	To examine the non-zero eigenvalues of $\wA$, we follow the same approach as in Theorem \ref{THN>M}.  As illustrated in Figure \ref{fig:nonzeroev}, we observe that the positions of the eigenvalues of $\wA$ are closely aligned with  those of the matrix $T_M$. This validates Theorem \ref{THN>M}.
	\begin{figure}[h]
		\centering
		\begin{subfigure}{0.41\textwidth}
			\centering
			\includegraphics[width=\linewidth]{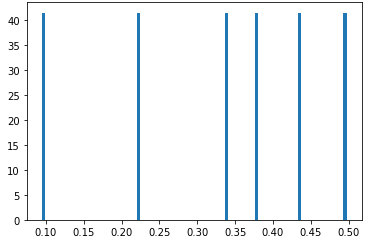} 
			\caption{Non-zero eigenvalues of the matrix $T_M$}
		\end{subfigure}
		\hfill
		\begin{subfigure}{0.58\textwidth}
			\centering
			\includegraphics[width=\linewidth]{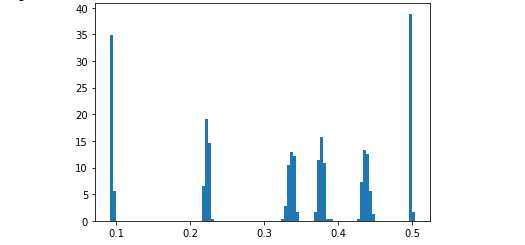} 
			\caption{Non-zero eigenvalues of the matrix $\wA$
			}
		\end{subfigure}
		\caption{Non-zero eigenvalues of the matrix $T$ (left) and non-zero eigenvalues of the matrix $\wA$ (right). These histograms have been obtained with  $c=6$ and $M=6$. For the histogram of the eigenvalues of the matrix $\wA$, the matrix is of  size  $N=10000$ and have been sampled 100 times.}
		\label{fig:nonzeroev}
	\end{figure}
	
\begin{remark} ~\\
	\begin{enumerate}
		\item Theorem 3.2 is still valid for any definite positive kernel and any orthonormal basis that diagonalizes its associated integral operator.
		\item By Theorem~\ref{THN>M}, we recover the result of \cite{AB2022} and \cite{BK2}  which establish that the eigenvalues of the sinc kernel random matrix can be approximated by those of the associated integral operator.
		\item 	By the Landau and Widom's $2T\Omega$ Theorem (see \cite{Hogan}), the function $f$ is well approximated by its expansion in the PSWFs basis using its first $\frac{2c}{\pi} + \alpha \log(c)$ coefficients for some positive real number $\alpha$. Then we can choose $M=\left[\frac{2c}{\pi} + \alpha \log(c)\right]$.
		
	\end{enumerate}

\end{remark}

\subsection{Eigenvalues distribution of the estimator $\wA$ in the case where $c/N$ is bounded}
In this case, the appropriate choice is the basis of Hermite functions.
It may be useful to recall some well known facts about these functions. 
The Hermite functions are defined by
$$\varphi_n(x) = \alpha_n H_n(x)e^{-x^2}; \qquad \alpha_n = \frac{1}{\pi^{1/4} 2^{n/2} \sqrt{n!}} $$
where $H_n$ are the Hermite polynomials given by 
$ H_n(x)=(-1)^n e^{x^2} \frac{d^n}{dx^n}\left[e^{-x^2}\right]$. 
The family $(\varphi_n)_n$ is an orthonormal basis of $L^2(\R)$. We use here a scaled version of Hermite functions given by $$ \varphi^{(c)}_n(t)= c^{1/4} \varphi_n (\sqrt{c}t). $$
Obviously, $(\varphi^{(c)}_n)$ is still an orthonormal basis of $L^2(\R)$.
In \cite{JATpaper}, authors have proven that, for any $f\in \mathcal{B}_c$,
\begin{equation}\label{hermite_err}
	\norm{f-K^{(c)}_n(f)}_{L^2(-\sqrt{c},\sqrt{c})} \leq \frac{34 c^{3/2}}{\sqrt{2n+1}} \norm{f}_{L^2(\R)},
\end{equation}
where $ K^{(c)}_n(f) := \displaystyle \sum_{k=0}^{n}<f,\varphi^{(c)}_k>_{L^2(\R)}\varphi^{(c)}_k(x)$.
To support equation \eqref{hermite_err}, we conducted numerical simulations using a Python program giving Figure \ref{fig:Hermite} for $c= 100$ and $n=30$. 
\begin{center}
	\begin{figure}[H]
		\includegraphics[scale=0.4]{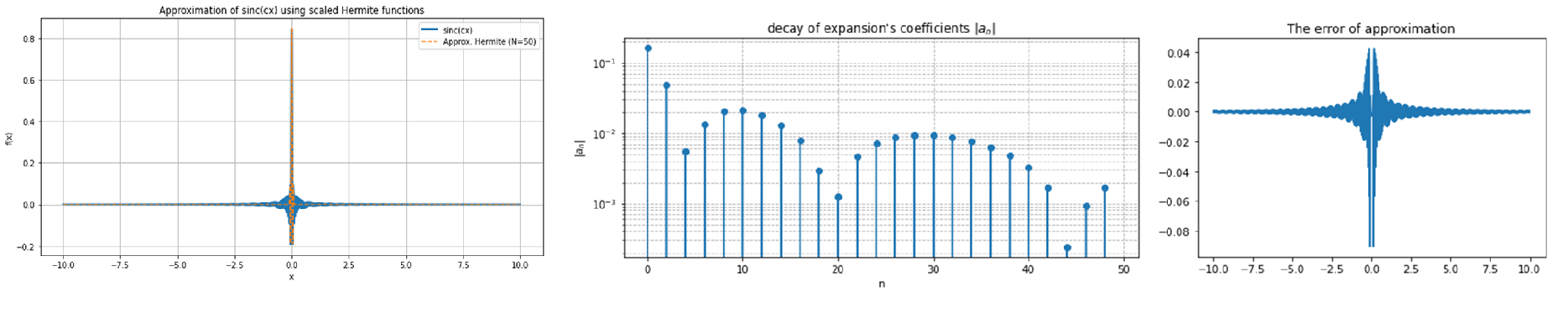}
		\caption{The approximation of $f=sinc$ using 30 scaled Hermite functions for c=100.}
		\label{fig:Hermite}
	\end{figure}
\end{center}
According to these numerical results, one may conjecture that the actual reconstruction of $f$ by $K_n^{(c)}f$ is much more precise than the one given by \eqref{hermite_err}. Based on these numerical approximations, we assert that the sinc function can be accurately approximated for large values of $c$ using a finite and reasonably small number of re-scaled Hermite functions. \\
In light of the argument presented in Theorem \ref{approx_vp}, this suggests that the eigenvalues of $A_N$ are close to those of the proposed estimator $\wA$ taken with $\phi_n = \varphi^{(c)}_n$.
	Now, we will focus on the first elements of $T_M$. 
\begin{proposition}
	\label{prop:Hermite_First_elts}
	For all positive real number $c$ and for all positive integer $n \leq c/2$, we have 
	$$ T_M(i,j) = \delta_{i,j}+R_{i,j}(c), $$
	where $|R_{i,j}(c)| \leq \frac{ (2c)^{\frac{l+j-2}{2}}}{\sqrt{\pi c} ((l-1)!)^{1/2}((j-1)!)^{1/2}} e^{-c}  $
\end{proposition}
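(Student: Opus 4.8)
The plan is to express the entries of $T_M$ via formula \eqref{expr2_T}, namely $T_M(i,j)=\frac{\pi}{c}\langle DB_cD\,\varphi^{(c)}_{i-1},\varphi^{(c)}_{j-1}\rangle_{L^2(\R)}$, and to show that for the low-order re-scaled Hermite functions this is essentially the full (unrestricted) inner product $\langle B_c\varphi^{(c)}_{i-1},\varphi^{(c)}_{j-1}\rangle$, which equals $\frac{c}{\pi}\delta_{i,j}$. Concretely, write $DB_cD = B_c - (B_c - DB_cD)$ and estimate the error operator. Since $\varphi^{(c)}_n(t)=c^{1/4}\varphi_n(\sqrt c\,t)$ and the standard Hermite function $\varphi_n$ is, up to the normalizing constant $\alpha_n$, the polynomial $H_n$ times $e^{-x^2}$, the mass of $\varphi^{(c)}_n$ outside $[-1,1]$ is governed by $\int_{|t|>1} |\varphi^{(c)}_n(t)|^2\,dt = \int_{|s|>\sqrt c}|\varphi_n(s)|^2\,ds$. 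The key analytic input is a tail bound of the form $\int_{|s|>\sqrt c}|\varphi_n(s)|^2\,ds \lesssim \frac{(2c)^{n}}{\sqrt{\pi c}\, n!}\,e^{-c}$ for $n\le c/2$, which comes from crudely bounding $H_n(s)^2 e^{-s^2}$ and integrating the resulting incomplete-Gamma-type tail; the restriction $n\le c/2$ is exactly what makes the Gaussian factor $e^{-s^2}$ dominate the polynomial growth on $|s|\ge\sqrt c$, so the tail integral is controlled by its value at the endpoint.

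The steps, in order. First, record that $B_c$ is an orthogonal projection on $L^2(\R)$ and that, because $\mathcal F(\varphi_n) = (-i)^n\varphi_n$ (Hermite functions are Fourier eigenfunctions), the re-scaled functions $\varphi^{(c)}_n$ satisfy $\widehat{\varphi^{(c)}_n}(\xi) = (-i)^n c^{-1/4}\varphi_n(\xi/\sqrt c)$, so that $\|(I-B_c)\varphi^{(c)}_n\|^2 = \int_{|\xi|>c}|\widehat{\varphi^{(c)}_n}(\xi)|^2 d\xi = \int_{|s|>\sqrt c}|\varphi_n(s)|^2 ds$; thus the frequency-tail and the time-tail of a re-scaled Hermite function are the same quantity, and both are small for $n\le c/2$. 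Second, write
\[
\langle DB_cD\,\varphi^{(c)}_{i-1},\varphi^{(c)}_{j-1}\rangle
= \langle B_c\varphi^{(c)}_{i-1},\varphi^{(c)}_{j-1}\rangle
- \langle (I-D)\varphi^{(c)}_{i-1},\, B_c\varphi^{(c)}_{j-1}\rangle
- \langle DB_c(I-D)\varphi^{(c)}_{i-1},\,\varphi^{(c)}_{j-1}\rangle,
\]
and bound the two error terms by Cauchy--Schwarz using $\|(I-D)\varphi^{(c)}_n\| = \big(\int_{|t|>1}|\varphi^{(c)}_n|^2\big)^{1/2}$ together with $\|B_c\|\le 1$ and $\|D\|\le 1$. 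Third, evaluate the main term: $\langle B_c\varphi^{(c)}_{i-1},\varphi^{(c)}_{j-1}\rangle = \int_{|\xi|\le c}\widehat{\varphi^{(c)}_{i-1}}\overline{\widehat{\varphi^{(c)}_{j-1}}} = \langle \varphi^{(c)}_{i-1},\varphi^{(c)}_{j-1}\rangle - \int_{|\xi|>c}(\cdots) = \delta_{i,j} - O(\text{tail})$. Fourth, collect: $T_M(i,j) = \frac{\pi}{c}\big(\delta_{i,j} + R_{i,j}(c)\big)$ where $R_{i,j}(c)$ is a combination of tail terms, each dominated by products $\|(I-D)\varphi^{(c)}_{i-1}\|\,\|(I-D)\varphi^{(c)}_{j-1}\|$ or the analogous frequency tails; inserting the endpoint estimate $\|(I-D)\varphi^{(c)}_{n}\|^2 \lesssim \frac{(2c)^{n}}{\sqrt{\pi c}\,n!}e^{-c}$ and multiplying the $i$- and $j$-tails gives exactly the claimed bound $|R_{i,j}(c)| \le \frac{(2c)^{(i+j-2)/2}}{\sqrt{\pi c}\,((i-1)!)^{1/2}((j-1)!)^{1/2}}\,e^{-c}$ (up to absorbing harmless constants into the estimate, since $\frac{\pi}{c}\cdot\frac{c}{\pi}=1$).

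The main obstacle is the sharp tail estimate for Hermite functions: one needs $\int_{|s|\ge \sqrt c}H_n(s)^2 e^{-s^2}\,ds$ controlled, for $n\le c/2$, by something of size $\alpha_n^{-2}\cdot\frac{(2c)^{n}}{\sqrt{\pi c}\,n!}e^{-c}$ so that after multiplying by $\alpha_n^2 = \frac{1}{\sqrt\pi\,2^n n!}$ the factorial and the $2^n$ combine correctly. The natural route is to use the classical bound $|H_n(s)|e^{-s^2/2}\le$ (a polynomial times $e^{-s^2/2}$) or the generating-function/Cramér-type inequality $|\varphi_n(s)|\le C$, refined on the far tail by the asymptotic decay $e^{-s^2/2}$; then the integral $\int_{\sqrt c}^\infty s^{2n}e^{-s^2}\,ds$ is a tail of the Gamma function $\Gamma(n+\tfrac12, c)$, and for $n\le c/2$ this incomplete Gamma tail is of the same order as its integrand at the endpoint, i.e.\ $\asymp c^{n-1/2}e^{-c}$. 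Verifying that the constants line up to produce precisely $(2c)^{(i+j-2)/2}/(\sqrt{\pi c}\,((i-1)!)^{1/2}((j-1)!)^{1/2})$ — in particular that the off-diagonal ($i\ne j$) terms are also covered by the same expression, which the symmetric product structure $\|(I-D)\varphi^{(c)}_{i-1}\|\cdot\|(I-D)\varphi^{(c)}_{j-1}\|$ makes plausible — is the delicate bookkeeping step, but it is routine once the one-variable tail bound is in hand.
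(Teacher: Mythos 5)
Your proposal is correct and follows essentially the same route as the paper: reduce $T_M(i,j)$ (via \eqref{expr2_T}) to the orthonormality of the re-scaled Hermite functions plus error terms controlled by the time-tail $\int_{|t|>1}|\varphi^{(c)}_n|^2$ and the frequency-tail $\int_{|\xi|>c}|\widehat{\varphi^{(c)}_n}|^2$, identify both with $\int_{|s|>\sqrt c}|\varphi_n(s)|^2\,ds$ using the Fourier-eigenfunction property, and finish with Cauchy--Schwarz to get the symmetric product bound. The only difference is that the paper imports the tail estimate \eqref{bound-phi} directly from \cite{BJK} rather than re-deriving it via incomplete-Gamma asymptotics as you sketch.
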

\begin{proof}
	We recall the following estimate in \cite{BJK}. For any positive integer $k$ and for any positive real number $a$ satisfying $ k \leq \frac{a^2-1}{2}$, we have 
	\begin{equation}\label{bound-phi}
		\int_{|t|>a} \big|\varphi_n(t)\big|^2 dt \leq \frac{2^{n+1}}{\sqrt{\pi}n!} a^{2n-1}e^{-a^2}.
	\end{equation}
	This implies, 
	\begin{eqnarray}
		\norm{\varphi^{(c)}_n-B_c\varphi^{(c)}_n}_{L^2(\R)} &=& \frac{1}{2\pi}\int_{|t|>c} \Big|\mathcal{F}\big[\varphi^{(c)}_n\big](t)\Big|^2 dt = \frac{1}{2\pi \sqrt{c}}\int_{|t|>c} \big|\mathcal{F}[\varphi_n](t/\sqrt{c})\big|^2 dt \nonumber \\
		&=& \frac{1}{2\pi} \int_{|t|>\sqrt{c}} \Big|\varphi_n(t)\Big|^2 dt
		\leq \frac{1}{2\pi}\frac{2^{n+1}}{\sqrt{\pi c}n!} c^{n}e^{-c}.
	\end{eqnarray}
	and, 
	\begin{equation}
		\label{ineq}
		\norm{\varphi^{(c)}_n-D\varphi^{(c)}_n }_{L^2(\R)} = \int_{|t|>1} \big|\varphi^{(c)}_n(t)\big|^2 dt = \frac{1}{2\pi} \int_{|t|>\sqrt{c}} \Big|\varphi_n(t)\Big|^2 dt
		\leq \frac{1}{2\pi}\frac{2^{n+1}}{\sqrt{\pi c}n!} c^{n}e^{-c}.
	\end{equation}
	Let
	\begin{eqnarray}
		R_{l,m}(c) & = & <DB_cD \varphi^{(c)}_{l-1},D\varphi^{(c)}_{m-1}>_{L^2(\R)} - <D \varphi^{(c)}_{l-1},D \varphi^{(c)}_{m-1}>_{L^2(\R)}  \\
		&=& <B_cD \varphi^{(c)}_{l-1},D \varphi^{(c)}_{m-1}>_{L^2(\R)} - <D \varphi^{(c)}_{l-1},D \varphi^{(c)}_{m-1}>_{L^2(\R)} \nonumber \\
		&=& <B_cD  \varphi^{(c)}_{l-1}-D \varphi^{(c)}_{l-1},D \varphi^{(c)}_{m-1}>_{L^2(\R)}  \nonumber
	\end{eqnarray}
	Then, 
	\begin{eqnarray}
		\left|R_{l,m}(c) \right| &\leq & \norm{B_cD \varphi^{(c)}_{l-1}-D \varphi^{(c)}_{l-1}}_{L^2(\R)}\norm{D \varphi^{(c)}_{m-1}}_{L^2(\R)} \nonumber \\
		&\leq& \frac{ (2c)^{l-1}}{\sqrt{\pi c} (l-1)!} e^{-c} = R_l(c).
		\label{reste}
	\end{eqnarray}
	On the other hand,
	\begin{eqnarray}
		<D \varphi^{(c)}_{l-1},D \varphi^{(c)}_{m-1}>_{L^2(\R)} &=& \int_{-1}^1 \varphi_{l-1}^{(c)}(x)\varphi_{m-1}^{(c)}(x) dx  \nonumber \\ &=&\int_{\R} \varphi_{l-1}^{(c)}(x)\varphi_{m-1}^{(c)}(x) dx - \int_{|x|>1} \varphi_{l-1}^{(c)}(x)\varphi_{m-1}^{(c)}(x) dx \nonumber \\
		&=& \delta_{i,j} - \int_{|x|>1} \varphi_{l-1}^{(c)}(x)\varphi_{m-1}^{(c)}(x) dx \nonumber \\
		&=& \delta_{i,j} + (R_{l}(c))^{1/2}(R_{m}(c))^{1/2}
	\end{eqnarray}  
	The last equality follows from the Cauchy-Schwarz inequality combined with \eqref{bound-phi}.
	
\end{proof}

 From Proposition~\ref{prop:Hermite_First_elts} and Figure~\ref{hermite_err}, we deduce that a convenient choice for the truncation parameter in this case is $M=[c/2]$ and that  

 	\[T_{M}= \frac{\pi}{c} I_M
 	\]
where $I_M$ is the square identity matrice of size $M \times M$. Therefore, we obtain the following approximation:
\begin{equation}
	\label{eq:mat_approx}
	\wA \approx \frac{\pi}{c}
\begin{bmatrix}
	H_M H_M^T & 0_{(N-M)\times(N-M)} \\
	0_{(N-M)\times(N-M)} & 0_{(N-M)\times(N-M)}
\end{bmatrix}
\end{equation}
where $H_M$ results from the truncation of the matrix $H$ to its first $M$ lines.
Collecting these results, we can consider that the matrix $A_N$ has $N-M$ eigenvalues almost equal to  zero.

\begin{lemma}\label{properties}
	In the case where the $(X_i)_{1 \leq i \leq N}$ follow the uniform law on $[-1,1]$, the matrix $H$ expressed in the scaled Hermite basis has the following properties. 
	\begin{itemize}
		\item $ \E[H(i,m)] = (R_{m}(c))^{1/2}, \quad \forall m \geq 2.$  $m,n,i,j \in \N$ such that $max\{m,n\} \geq 2$ and $i \neq j$. 
		\item  $\E[H(i,m)H(i,n)] =  \delta_{m,n} + (R_{n}(c))^{1/2}(R_{m}(c))^{1/2} \quad \forall m,n. $
	\end{itemize}
Here, the quantity $R_m(c)$ is defined in \eqref{reste}.
\end{lemma}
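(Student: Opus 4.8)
The plan is to observe that all three stated relations are just first- and second-moment computations for the scalar random variables $H(i,m)=\varphi^{(c)}_{m-1}(X_i)$, and to reduce each of them to a one-dimensional integral of a product of scaled Hermite functions, which is then handled exactly as in the proof of Proposition~\ref{prop:Hermite_First_elts}. Since $X_i$ is uniform on $[-1,1]$, $\E[\Psi(X_i)]$ equals the integral of $\Psi$ over $[-1,1]$ (against the normalised uniform density), and since the $X_i$ are i.i.d., $\E[H(i,m)H(j,n)]=\E[H(i,m)]\,\E[H(j,n)]$ whenever $i\ne j$. So the cross-moment assertion for $i\ne j$ will follow at once from the first identity, and everything comes down to the two integrals $\int_{-1}^{1}\varphi^{(c)}_{m-1}$ and $\int_{-1}^{1}\varphi^{(c)}_{m-1}\varphi^{(c)}_{n-1}$.

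For the product moment I would split off the integral over the whole line,
\[
\int_{-1}^{1}\varphi^{(c)}_{m-1}(x)\,\varphi^{(c)}_{n-1}(x)\,dx
=\int_{\R}\varphi^{(c)}_{m-1}\varphi^{(c)}_{n-1}\;-\;\int_{|x|>1}\varphi^{(c)}_{m-1}\varphi^{(c)}_{n-1}
=\delta_{m,n}-\int_{|x|>1}\varphi^{(c)}_{m-1}\varphi^{(c)}_{n-1},
\]
the first term being $\delta_{m,n}$ because $(\varphi^{(c)}_n)_{n\ge0}$ is orthonormal in $L^2(\R)$. Cauchy--Schwarz bounds the remaining tail by $\norm{\varphi^{(c)}_{m-1}}_{L^2(|x|>1)}\,\norm{\varphi^{(c)}_{n-1}}_{L^2(|x|>1)}$, and each factor is at most $(R_m(c))^{1/2}$ (resp. $(R_n(c))^{1/2}$) by the Hermite tail estimate \eqref{bound-phi}, which is exactly the computation already carried out in the proof of Proposition~\ref{prop:Hermite_First_elts}; this yields the second bullet. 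For the first identity one argues the same way with $\E[H(i,m)]=\int_{\R}\varphi^{(c)}_{m-1}-\int_{|x|>1}\varphi^{(c)}_{m-1}$: the full-line integral vanishes by the odd parity of $\varphi^{(c)}_{m-1}$ when $m$ is even, and the remaining tail is again governed by the exponentially small quantity $(R_m(c))^{1/2}$. Multiplying the two first-moment estimates then produces the value of $\E[H(i,m)H(j,n)]$ claimed for $i\ne j$, the hypothesis $\max\{m,n\}\ge 2$ serving only to exclude the index $0$, for which $\varphi^{(c)}_0$ is an $L^1$-type Gaussian bump whose mass on $[-1,1]$ is close to one rather than exponentially small.

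The delicate step, and essentially the only obstacle, is controlling the first-moment tail: the naive estimate $\big|\int_{|x|>1}\varphi^{(c)}_{m-1}\big|\le\norm{\varphi^{(c)}_{m-1}}_{L^2(|x|>1)}\,\norm{\mathbf 1_{|x|>1}}_{L^2(\R)}$ is useless because $\mathbf 1_{|x|>1}\notin L^2(\R)$, so one has to pair $\varphi^{(c)}_{m-1}$ against an integrable weight (or exploit its pointwise sub-Gaussian decay) while keeping the full factor $e^{-c}$ carried by $R_m(c)$. After the substitution $u=\sqrt c\,x$ these tails become truncated moments of the fixed Hermite function $\varphi_{m-1}$ on $\{|u|>\sqrt c\}$, to which \eqref{bound-phi} applies with $a=\sqrt c$ precisely when $m-1\le (c-1)/2$; this is the only point where the regime $m\le c/2$ of Proposition~\ref{prop:Hermite_First_elts} is used, and the statement above is to be understood under that restriction.
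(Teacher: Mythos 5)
The paper states Lemma~\ref{properties} without any proof, so there is nothing to compare against line by line; judged on its own terms, your reduction of both bullets to the one--dimensional integrals $\int_{-1}^{1}\varphi^{(c)}_{m-1}$ and $\int_{-1}^{1}\varphi^{(c)}_{m-1}\varphi^{(c)}_{n-1}$, together with independence for the $i\neq j$ cross moment, is the natural route, and your treatment of the second bullet (orthonormality of $(\varphi^{(c)}_n)$ in $L^2(\R)$, then Cauchy--Schwarz and the tail estimate \eqref{bound-phi}) reproduces exactly the computation already carried out at the end of the proof of Proposition~\ref{prop:Hermite_First_elts}. Two caveats there: the uniform law on $[-1,1]$ has density $\tfrac12$, so strictly $\E[H(i,m)H(i,n)]=\tfrac12\int_{-1}^{1}\varphi^{(c)}_{m-1}\varphi^{(c)}_{n-1}$, which already spoils the leading term $\delta_{m,n}$ by a factor $2$ (the paper drops this factor throughout, so you are at least consistent with its conventions); and Cauchy--Schwarz only yields $\bigl|\E[H(i,m)H(i,n)]-\delta_{m,n}\bigr|\le (R_m(c))^{1/2}(R_n(c))^{1/2}$, so the bullet must be read as a remainder bound rather than an identity, which is how you implicitly read it.

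The genuine gap is in the first bullet, and your own parity remark is the crack in it: $\int_{\R}\varphi^{(c)}_{m-1}=0$ only when the Hermite index $m-1$ is odd. When $m-1$ is even (i.e.\ $m=3,5,\dots$, all permitted by ``$m\ge 2$''), the rescaling gives $\int_{\R}\varphi^{(c)}_{m-1}=c^{-1/4}\int_{\R}\varphi_{m-1}$ with $\int_{\R}\varphi_{m-1}\asymp (m-1)^{-1/4}$ strictly positive (for instance $\int_{\R}\varphi_2=\pi^{1/4}$), so $\E[H(i,m)]$ is of polynomial order $c^{-1/4}$ in $c$, not of order $(R_m(c))^{1/2}\sim e^{-c/2}$: for $m=3$ and $c=100$ one finds $\E[H(i,3)]\approx 0.2$ while $(R_3(c))^{1/2}\approx 10^{-20}$. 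Hence for odd $m\ge 3$ the first bullet is not merely unproved by your sketch, it is false, and the $i\neq j$ cross--moment value you obtain by squaring the first moment fails with it; no completion of the argument can rescue the statement as written. The ``delicate step'' you flag about bounding $\int_{|x|>1}\varphi^{(c)}_{m-1}$ without an $L^2$ pairing is a legitimate secondary obstacle, but it is dominated by this one: the problem is the bulk, not the tail. A correct version of the lemma must either restrict the first bullet to even $m$ or replace $(R_m(c))^{1/2}$ by a bound of order $c^{-1/4}(m-1)^{-1/4}$ for even Hermite indices, and this propagates into the expression for $\E[G_M]$ used in Theorem~\ref{th:chernoff}.
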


We are now ready to state the theorem on the locations of the eigenvalues of the matrix $H_M H_M^T$ and therefore those of the matrix $\wA$.
\begin{theorem}
	\label{th:chernoff}
	Let $G_M = H_MH_M^T$. Then, we have 
	 \begin{equation}\label{eq:lambdamin}
		\PP
		\left(\lambda_{{\min}}\left(G_M\right) \ge (1-\delta)M(1+MR_{M-1}(c)) \right) 
		\ge 1- M \cdot \exp\left(-\frac{\delta^2(1+MR_{M-1}(c))}{2(c\pi)^{1/4}}e^{c/2}\right)
	\end{equation}
	\begin{equation}\label{eq:lambdamax}
		\PP\left(\lambda_{{\max}}\left(G_M\right)
		\le (1+\delta)M(1+MR_0(c)) \right) 
		\ge 1- M \cdot \exp\left(-\frac{\delta^2(1+MR_0(c))}{3(c\pi)^{1/4}}e^{c/2}\right).
	\end{equation}
Here, the quantities $R_0(c)$  and $R_{M-1}(c)$ are defined in \eqref{reste}.	
\end{theorem}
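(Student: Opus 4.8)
The plan is to view $G_M = H_M H_M^T$ as a sum of $N$ independent rank-at-most-$M$ positive semidefinite matrices and apply the matrix Chernoff inequality. Write $G_M = \sum_{i=1}^N Y_i$ where $Y_i = H_i^T H_i \in \R^{M\times M}$ with $H_i = (\varphi^{(c)}_0(X_i),\ldots,\varphi^{(c)}_{M-1}(X_i))$; each $Y_i$ is PSD of rank one. By Lemma \ref{properties}, $\E[Y_i] = \E[H_1^T H_1]$ has entries $\delta_{m,n} + (R_m(c))^{1/2}(R_n(c))^{1/2}$, so $\mu_{\min} := \lambda_{\min}(\E G_M) = N\lambda_{\min}(\E Y_1)$ and $\mu_{\max} := \lambda_{\max}(\E G_M) = N\lambda_{\max}(\E Y_1)$. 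Since $\E Y_1 = I_M + v v^T$ with $v = ((R_0(c))^{1/2},\ldots,(R_{M-1}(c))^{1/2})^T$, its eigenvalues are $1$ (multiplicity $M-1$) and $1 + \|v\|^2 = 1 + \sum_{k=0}^{M-1} R_k(c)$; using that $R_k(c)$ is increasing in $k$ for $k\le c/2$ one gets the two-sided crude bounds $1 \le \lambda_{\min}(\E Y_1)$ and $\lambda_{\max}(\E Y_1) \le 1 + M R_{M-1}(c)$, which after multiplying by $N$ and absorbing the $N$ into the scaling produces the claimed $M(1+MR_{M-1}(c))$ and $M(1+MR_0(c))$ expressions (here the factor $M$ in place of $N$ reflects the normalization in \eqref{eq:mat_approx}; I will track this carefully so the statement matches).

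The second step is to bound the per-sample operator norm $L := \|Y_i\| = \|H_i\|^2 = \sum_{k=0}^{M-1} \varphi^{(c)}_k(X_i)^2$ uniformly in $X_i\in[-1,1]$. Using the pointwise bound on scaled Hermite functions on a bounded interval (the same estimate \eqref{bound-phi} that underlies Proposition \ref{prop:Hermite_First_elts}, evaluated via $|\varphi_n(t)|\le $ a Gaussian-type envelope for $|t|\le\sqrt c$), one obtains $L \le $ a quantity comparable to $(c\pi)^{1/4} e^{-c/2}$ inverted appropriately — more precisely $L \le C (c\pi)^{-1/4} e^{c/2}$ for a suitable constant, which is exactly the exponent appearing in \eqref{eq:lambdamin}--\eqref{eq:lambdamax}. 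With $\mu_{\min},\mu_{\max}$ and $L$ in hand, the matrix Chernoff bound (Tropp) gives, for $\delta\in(0,1)$,
\[
\PP\left(\lambda_{\min}(G_M) \le (1-\delta)\mu_{\min}\right) \le M\exp\!\left(-\frac{\delta^2 \mu_{\min}}{2L}\right),\qquad
\PP\left(\lambda_{\max}(G_M) \ge (1+\delta)\mu_{\max}\right) \le M\exp\!\left(-\frac{\delta^2 \mu_{\max}}{3L}\right).
\]
Substituting $\mu_{\min}\ge M(1+MR_{M-1}(c))$, $\mu_{\max}\le M(1+MR_0(c))$ and $1/L \ge c' (c\pi)^{-1/4} e^{c/2}/M$ into the exponents, and simplifying, yields \eqref{eq:lambdamin} and \eqref{eq:lambdamax}.

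The main obstacle I anticipate is reconciling the normalization: the theorem's bounds are stated with $M$ (not $N$) multiplying the parenthetical factors, and with $1/L$ appearing without any $M$ or $N$ in the exponent, so I must pin down exactly which scaling of $H_M H_M^T$ (and of $\E Y_1$) is intended — presumably $G_M$ is taken so that $\E G_M \approx M\,\E Y_1$ after dividing by $N/M$, consistent with $\frac{1}{N}BB^T\to T_M$ in Theorem \ref{THN>M} — and track the constants $2$ versus $3$ in the denominators, which come directly from the lower- and upper-tail forms of the matrix Chernoff inequality. The Hermite pointwise/operator-norm estimate giving the clean $e^{c/2}(c\pi)^{-1/4}$ factor is the other technical point; it should follow from \eqref{bound-phi} together with standard Hermite function bounds, but getting the constant to match the statement will require care.
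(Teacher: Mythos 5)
Your overall strategy coincides with the paper's: write $G_M$ as a sum of rank-one positive semidefinite matrices, compute $\E[G_M]$ from Lemma \ref{properties}, bound the per-summand operator norm $L$ by a pointwise estimate on the scaled Hermite functions, and invoke Tropp's matrix Chernoff inequality. One genuine difference is the decomposition itself: the paper sums over the basis index, $G_M=\sum_{k=0}^{M-1}G_M^{(k)}$ with $G_M^{(k)}(i,j)=\varphi_k^{(c)}(X_i)\varphi_k^{(c)}(X_j)$ (each summand depends on \emph{all} the $X_i$), whereas you sum over samples, $\sum_i H_i^TH_i$, which is the decomposition for which the independence hypothesis of the matrix Chernoff theorem actually holds; since $H_M$ is square, $H_M^TH_M$ and $H_MH_M^T$ have the same spectrum, so your route is legitimate and in this respect preferable. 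Note, however, that $H_M$ is the $M\times M$ truncation of $H$, so your sum should run over $i=1,\dots,M$, not $i=1,\dots,N$; the "normalization to reconcile" that you flag disappears once this is fixed, and there is no $N$ anywhere in the statement.

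There remain two genuine gaps. First, the bound on $L$: this single quantity produces the entire factor $e^{c/2}(c\pi)^{-1/4}$ in the exponent of the probability bounds, and you never establish it — indeed you state it inconsistently, once as $L\lesssim (c\pi)^{-1/4}e^{c/2}$ and then implicitly as its reciprocal when you substitute $1/L\gtrsim (c\pi)^{-1/4}e^{c/2}/M$. The paper obtains $L=M(c\pi)^{1/4}e^{-c/2}$ via Gershgorin applied to each rank-one block together with a sup-norm estimate of $\varphi_k^{(c)}$ on $[-1,1]$; in your sample-wise decomposition the analogous step is $\|H_i^TH_i\|=\sum_{k=0}^{M-1}\varphi_k^{(c)}(X_i)^2\le M\sup_k\sup_{|x|\le 1}|\varphi_k^{(c)}(x)|^2$, and this pointwise bound must be written down explicitly, not deferred. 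Second, the identification of $\mu_{\min}$ and $\mu_{\max}$ with the quantities in \eqref{eq:lambdamin}--\eqref{eq:lambdamax} does not close as you have it: your (correct) computation $\E[H_i^TH_i]=I_M+vv^T$ gives $\mu_{\min}=M\cdot 1=M$ and $\mu_{\max}=M(1+\sum_{k=0}^{M-1}R_k(c))\le M(1+MR_{M-1}(c))$ (since $R_k(c)$ is increasing in $k$ in this range), which yields neither the factor $(1+MR_{M-1}(c))$ claimed in the lower-tail bound nor the factor $(1+MR_0(c))$ claimed in the upper-tail bound. "Absorbing the $N$ into the scaling" is announced but never carried out, and as written the chain of inequalities does not reach the stated conclusion; this step must be made explicit (or the constants in the target inequalities adjusted to what the Chernoff bound actually delivers, namely $\mu_{\min}=M$ and $\mu_{\max}=M(1+\sum_k R_k(c))$).
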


\begin{proof}
	This proof is based on the following Chernoff's theorem: \\
	\noindent 
	{\bf Matrix Chernoff Theorem \cite{Tropp}:} 
	Consider a sequence of $n$ independent $M \times M$
	random Hermitian matrices $\{\pmb{Z}_k\}$. Assume that for some $L>0$, we have 
	$ 0 \preccurlyeq \pmb{Z}_k \preccurlyeq L \cdot \pmb{I}_M\,${{
			\footnote{{{For two symmetric matrices $A$ and $B$ of  the same order, the relation $A \preccurlyeq B$ means that the matrix $B-A$ is positive semi-definite.}}}}}. 
	Let 
	$ \pmb{A}=\sum_{k=1}^n \pmb{Z}_k, \quad \ \mu_{\min}=
	\lambda_{\min}(\mathbb{E}(\pmb{A}))$ {and}
	$ \mu_{\max}=
	\lambda_{\max}(\mathbb{E}(\pmb{A}))\,.
	$
	Then, for any $\delta \in (0,1]$, we have 
	\begin{eqnarray}
		\PP\left(\lambda_{\min}(\pmb{A}) \le (1-\delta) \mu_{\min} \right) &\le& M\cdot  \exp\left(  -\frac{\delta^2 \mu_{\min}}{2L}\right), \nonumber \\ 
		\PP\left(\lambda_{\max}(\pmb{A}) \ge (1+\delta) \mu_{\max} \right) &\le& M \cdot \exp\left(  -\frac{\delta^2 \mu_{\max}}{3L}\right).
	\end{eqnarray}
	Let $G_M = \sum_{k=0}^{M-1} G^{(k)}_M $ with $ G^{(k)}_M (i,j) = \varphi^{(c)}_k(X_i) \varphi^{(c)}_k(X_j)$.
	By Gershgorin's theorem, one has
	 $$ \lambda_{\max} (G^{(k)}_M) \le \max_{1 \le i \le M} 
	\left( \big |\varphi^{(c)}_k(X_i)\big |^2 + \sum_{j \neq i} \big |\varphi^{(c)}_k(X_i)\varphi^{(c)}_k(X_j)\big |  \right) \leq M \sup_{x\in [-1,1]} \big|\varphi^{(c)}_k(x)\big|^2 .
	$$
	Using \cite{NIST} page 450, one concludes that
	\begin{equation}
		\lambda_{\max} (G^{(k)}_M) \le M (c\pi)^{1/4} e^{-c/2}=L(c,M).
	\end{equation}
	Then, $0 \preceq G^{(k)}_M \preceq LI_M$.
	We have now to compute $\E(G_M)$. By Lemma \ref{properties}, one has 
	\begin{equation}
		\E[G_M](i,j) = 
			\begin{cases}
			M + \displaystyle\sum_{k=0}^{M-1}R_{k}(c), \ \mbox{ if }\ i=j ;\\
			\displaystyle \sum_{k=0}^{M-1}R_{k}(c), \ \mbox{ otherwise. }
			\end{cases}
	\end{equation}
	Consequently, we obtain \eqref{eq:lambdamin} and \eqref{eq:lambdamax} using the Matrix Chernoff Theorem.

\end{proof}

\section{Conclusion}
In this paper, we have investigated the distribution of the eigenvalues of the some class of random matrices $A_N$ and especially the sinc kernel case. First, we have shown that the random matrix can be approximated by the product $\wA=H T_M H^T$. We have shown that the eigenvalues of $A_N$ are well approximated by those of $\wA$.
In the case where $c$ is fixed and $N$ is large, we have shown that the eigenvalues distribution is almost $\delta_0$. The remaining nonzero eigenvalues are approximated by those of the related integral operator. In the case of large values of $c$, Theorem \ref{th:chernoff} with \eqref{eq:mat_approx} imply that the eigenvalues are concentrated around zero and one. The frequencey of eigenvalues close to one is proportional to $c$. In order to illustrate these results, Figure \ref{fig:conc} gives an overview for different situations that can happen depending on the values of $c$ respectively to $N$. In the last two cases, we have any theoretical explication why the eigenvalues are concentrated on only two values around one. 

	\begin{figure}[H]
		\centering
		\includegraphics[scale=0.4]{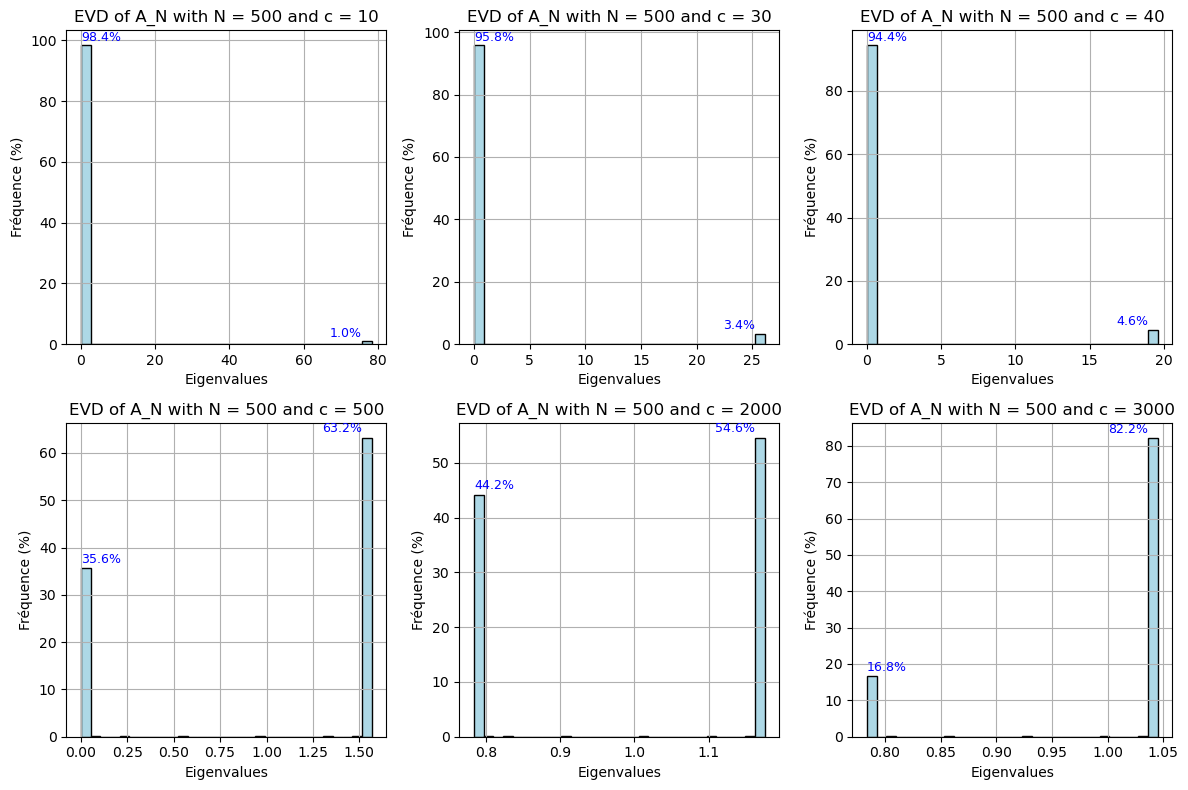}
		\caption{The eigenvalue distribution of the sinc random matrix for different values of $c$.}
		\label{fig:conc}
	\end{figure}

\qed

\noindent 
{\large {\bf Statements and  Declarations}}\\

\noindent
{\bf Conflict of interest}   The authors declare that they have no conflict of interest.\\

\noindent
{\large {\bf Funding} This work was supported by the  
	DGRST  research grant  LR21ES10.\\
	
\noindent {\bf Data availability :} This article has no associated data.


\begin{thebibliography}{5}
				\bibitem{AB2022}{\sc  Z. Aloui, A. Bonami.}
				\newblock{Some random Fourier matrices and their singular values.}	Journal of Mathematical Analysis and Applications, {\bf 508}(2),	(2022).
				
				\bibitem{Amini2021}
				{\sc A.A.  Amini.} \newblock{Spectrally-truncated kernel ridge regression and its free lunch.} Electronic Journal of	Statistics, {\bf 15}(2), 3743–-3761, (2021).
				
				\bibitem{BsK2024}
				{\sc A. BenSaber, A. Karoui}
				\newblock{A Distribution Free Truncated Kernel Ridge Regression Estimator and
					Related Spectral Analyses.} (Submitted).
				
				\bibitem{BBZ2007} {\sc G. Blanchard, O. Bousquet and L. Zwald.}
				\newblock{Statistical properties of kernel prinicipal component analysis.} Mach. Learn. {\bf 66}(2), 259-–294, (2007). 
				
				\bibitem{Book:Bhatia97} {\sc R. Bhatia}. \newblock{Matrix Analysis}.
				Graduate Texts in Mathematics, Springer, (1997). 
				
				\bibitem{BJK} {\sc A. Bonami, P. Jaming and A. Karoui.} \newblock{Non asymptotic behavior of the spectrum of the sinc kernel operator and related applications.} J. Math. Phys, {\bf 62}, 033511, (2021).
				
				\bibitem{BK17} {\sc A. Bonami, A.Karoui.}
				\newblock{Approximations in Sobolev spaces by prolate spheroidal wave functions.} Appl. Comput.Harmon.Anal.{\bf 42}, 361–-377, (2017).
				
				\bibitem{BK2} {\sc A. Bonami, A. Karoui.}
				\newblock{Random Discretization of the Finite Fourier Transform and Related Kernel Random Matrices.} Journal of Fourier Analysis and Applications, {\bf 26}(29), (2020).  
				
				
				\bibitem{Book:BLM2013} {\sc S. Boucheron, G. Lugosi
					and P. Massart}. \newblock{Concentration Inequalities: A Nonasymptotic Theory of Independence.} Oxford University Press, USA, (2013).

	
				\bibitem{DLP2013} {\sc M. Desgroseilliers, O. Leveque, E. Preisman.} \newblock{Partially random matrices in line-of-sight wireless networks.} In: Proc. IEEE 	Asilomar  Asilomar Conference on Signals, Systems and Computers,	 1026--1030, (2013).
				
		
				
				\bibitem{Skipetrov}
				{\sc A. Goetschy, S.E. Skipetrov.}
				\newblock{ Eigenvalue distributions of large euclidean random matrices for waves in random media.}
				J. Phys. A. Math. Theor {\bf 44}, (2011).
				
				\bibitem{Skipetrov2}
				{\sc A. Goetschy, S.E. Skipetrov.}
				\newblock{ Euclidean random matrices and their applications in physics.} (2013). http://arxiv.org/abs/1303.2880.
				
				\bibitem{Hogan} { \sc J.A.Hogan, J.D.Lackey.}  \newblock{Duration and bandwidth limiting prolate functions, sampling and applications}  Applied and numerical harmonic analysis series
				Birkhäuser, Springer, New York, London, (2013)
				
				\bibitem{JATpaper} {\sc P.Jaming, A. Karoui and S. Spektor.} \newblock{The approximation of almost time and band-limited functions by their expansion in some orthogonal polynomials bases.} JAT, {\bf 212},  41--65, (2016)

  			
  				\bibitem{JK2024}
  			{\sc M. Jebalia, I. Karoui.} \newblock{A multivariate Jacobi polynomials regression estimator associated with an ANOVA decomposition model} Metrika, {\bf 88}, 267–310, (2025). 
  			
  			\bibitem{JKh2024}
  			{\sc M. Jebalia, I. Khazri.} \newblock{Efficient multivariate kernel based schemes for nonparametric functions approximations}, Journal of  Statistical Theory and Practice {\bf 19}, 22, (2025). 
  			
  			
  			
  				\bibitem{NKaroui2010}
				{\sc  N. Karoui.} \newblock{The spectrum of Kernel random matrices.} The Annals of Statistics, {\bf 38}(1), 1--50,  Institute of Mathematical Statistics, (2010).
				
		
				
				\bibitem{KG2000} {\sc 	V. Koltchinskii, E. Giné.} \newblock{Random matrix approximation of spectra of integral operators.} Bernoulli, {\bf 6}(1), 113--167, (2000).
				

				\bibitem{NIST} { \sc F. W. Olver, Daniel W. Lozier, Ronald F. Boisvert, and Charles W.Clark.}  \newblock{NIST Handbook of Mathematical Functions}  Cambridge University
				Press, New York, NY, USA, 1st edition, (2010).
				
				\bibitem{Parisi2006}
				{\sc G. Parisi.}  \newblock{Euclidean Random Matrices : Solved and Open Problems}. In Applications of Random Matrices in Physics (NATO Science Series II: Mathematics, Physics	and Chemistry), Springer, 219--260, (2006).
				
				\bibitem{RW2005}
				{\sc C.E. Rasmussen, C.K.I. Williams.} Gaussian Processes for Machine Learning, the MIT Press,  Massachusetts Institute 
				of Technology, (2005).
				
				
				\bibitem{Shawe_et_al2005}
				{\sc J. Shawe-Taylor,  K.I. Williams, N. Cristianini and J. Kandola} 
				\newblock{On the Eigenspectrum of the Grammatrix and the generalized error of kernel-PCA.} In IEEE Transactions on Information Theory, (2005).			
				
				
				\bibitem{Sle1}
				{\sc D. Slepian, H.O.Pollak.}
				\newblock{Prolate spheroidal wave functions, Fourier analysis and uncertainity. I.}
				Bell System Tech. J. {\bf 43}, 3009--3058, (1964).
		
			\bibitem{Tropp}  {\sc J.A. Tropp.} \newblock{Matrix Concentration \& Computational Linear Algebra.} Caltech CMS	Lecture Notes 2019-01, Pasadena, (2019).
				
				\bibitem{Wang4}
				{\sc L. L. Wang}\newblock{A Review of Prolate Spheroidal Wave Functions from the Perspective of Spectral Methods.}
				J.Math.Study, {\bf 50}(2), 101--143, (2017).
				
				
				
			\end{thebibliography}
\end{document}